\numberwithin{equation}{section}
\def\NN{\mathbb{N}}
\def\cD{\mathcal{D}}
\def\cA{\mathcal{A}}
\def\cV{\mathcal{V}}
\def\cF{\mathcal{F}}
\def\cG{\mathcal{G}}
\def\cL{\mathcal{L}}
\def\cJ{\mathcal{J}}
\def\cN{\mathcal{N}}
\def\cH{\mathcal{H}}
\def\cX{\mathcal{X}}
\def\eps{\varepsilon}
\def\calP{\mathcal{P}}
\def\cZ{\mathcal{Z}}
\def\unit{\mathbbm{1}}  
\def\tr{{\rm tr\,}}
\def\bE{{\mathbb{E}}}
\newtheorem{theorem}{Theorem}[section]  
\newtheorem{lemma}[theorem]{Lemma}
                          \let\r=r
\begin{document}

\title{Free Energy of the Quantum Sherrington-Kirkpatrick Spin-Glass Model with Transverse Field}

\author{Arka Adhikari, Christian Brennecke
\\
\\
Department of Mathematics, Harvard University, \\
One Oxford Street, Cambridge MA 02138, USA$$ \\
\\}

\maketitle

\begin{abstract}
We consider the quantum Sherrington-Kirkpatrick (SK) spin-glass model with transverse field and provide a formula for its free energy in the thermodynamic limit, valid for all inverse temperatures $\beta>0$. To characterize the free energy, we use the path integral representation of the partition function and approximate the model by a sequence of finite-dimensional vector-spin glasses with $\mathbb{R}^d$-valued spins. This enables us to use results of Panchenko who generalized in \cite{Pan2,Pan3} the Parisi formula to classical vector-spin glasses. As a consequence, we can express the thermodynamic limit of the free energy of the quantum SK model as the $d\to\infty$ limit of the free energies of the $d$-dimensional approximations of the model. 
\end{abstract} 



\section{Introduction} 

In this note, we consider systems of $N$ spin-$\frac12$ particles, interacting through i.i.d. standard Gaussian couplings $ (g_{ij})_{1\leq i,j\leq N}$. The Hilbert space describing the system is \linebreak $\mathfrak{H}_N= \bigotimes _{i=1}^N\mathbb{C}^{2}$ and the Hamiltonian $H_N:\mathfrak{H}_N\to\mathfrak{H}_N$ reads
		\begin{equation}\label{eq:defHN} 
		H_N = \frac{\beta}{\sqrt N} \sum_{1\leq i, j\leq N} g_{ij} \sigma_{i}^z\sigma_{j}^z  + h\sum_{i=1}^N \sigma_{i}^x.
		\end{equation}
The inverse temperature is denoted by $ \beta >0$ and the transverse field strength is denoted by $ h > 0$. By $ \sigma^\alpha$, $\alpha\in \{x,y,z\} $, we denote the standard Pauli matrices. In the canonical basis of $\mathbb{C}^2$, they are represented by
		\begin{equation*}\label{eq:defPauli}
		\sigma^x= \left(\begin{matrix} 0 & 1\\ 1&0 \end{matrix}\right), \hspace{0.5cm} \sigma^y= \left(\begin{matrix} 0 & -i\\ i&0 \end{matrix}\right),\hspace{0.5cm}\sigma^z= \left(\begin{matrix} 1 & 0\\ 0&-1 \end{matrix}\right).
		\end{equation*}
For $ i\in \{1,\dots, N\}$, the operator $\sigma^\alpha_i $ acts on the $i$-th particle as $ \sigma^\alpha$ and as the identity operator on the remaining particles. The partition function $Z_N$ and the  specific free energy $f_N$ of the system are defined by
		\begin{equation*}\label{eq:deffreeenergy}Z_N = Z_N(\beta,h)= \tr e^{H_N}, \hspace{0.5cm}  f_N= f_N(\beta,h) = \frac1{N}\log Z_N.
		\end{equation*} 
Gibbs expectations will be denoted by $  \langle\cdot \rangle =\langle\cdot\rangle_{\beta,h}  = Z_N^{-1} \tr (\cdot)e^{H_N}$.

By setting $h=0$ in \eqref{eq:defHN}, $H_N$ is diagonal in the standard basis of $\mathfrak{H}_N=\bigotimes _{i=1}^N\mathbb{C}^{2}$ and we recover the classical SK model \cite{SK}. In the thermodynamic limit $N\to\infty$, the free energy of this model is given by the minimum of the Parisi functional, predicted by Parisi in \cite{Par1,Par2}. At high temperature, the system is in the so called replica symmetric phase and the formula for the free energy is quite explicit (in fact, for zero external field the limit $f(\beta)$ of the free energy is $f(\beta)= \beta^2/2$ as long as $\sqrt 2\beta<1$). This has been established rigorously, among other properties of the SK model, by Aizenman-Lebowitz-Ruelle in \cite{AizLeRue}. At low temperature, the system undergoes a phase transition into the replica symmetry breaking phase and understanding the free energy becomes more delicate. Based on a crucial result of Guerra \cite{Gue}, the validity of the Parisi formula for all temperatures was proved by Talagrand in \cite{Tal} (in fact, for more general, even $p$-spin interaction models) and by Panchenko in \cite{Pan}, based on his work \cite{Pan4} (for all mixed $p$-spin interaction models). For a thorough discussion of the classical SK and related spin-glass models, we refer the reader to the standard works \cite{MPV, Pan1,Tal1,Tal2}.  

In the quantum setting, there are relatively few results available compared to the classical setting, both in the physics, let us mention here only \cite{BM, IY, SO, Y2} and refer to the review article \cite{Y1} for further references, as well as the mathematics literature \cite{Cr, LeRoRuSp, MaWar1, MaWar2}. Most important for this note is the work of Crawford \cite{Cr} that deals with the existence and universality of the free energy in the thermodynamic limit. \cite{Cr} proves in particular that the thermodynamic limit of the free energy exists for all $\beta>0$ and is equal to the limit of the quenched free energy, the average of the free energy over the Gaussian disorder of the system. More precisely, it follows from \cite[Theorem 2]{Cr} that
		\[\lim_{N\to\infty}\bE\big| f_N- \bE f_N\big|^3 = 0. \]
Here and in the following, $\mathbb{E}$ denotes the expectation w.r.t. the Gaussian disorder. 

Although \cite{Cr} proves the existence of the specific free energy in the thermodynamic limit for all temperatures $\beta>0$, it does not provide an explicit formula for it. The recent results by Leschke-Rothlauf-Ruder-Spitzer \cite{LeRoRuSp} and by Manai-Warzel \cite{MaWar1, MaWar2}, on the other hand, deal with the free energy of the quantum SK model with transverse field at high temperature and, respectively, the phase diagram of a quantum generalization of the classical REM model \cite{Der1, Der2}. More precisely, \cite{LeRoRuSp} provides an analysis of the so called annealed free energy $ f_N^{\text{ann}}=N^{-1}\log ( \mathbb{E} Z_N )$ and proves, among other properties, that at sufficiently high temperature it is equal to the quenched free energy $\bE f_N$ in the thermodynamic limit. This is analogous to the classical setting \cite{AizLeRue}, where the equality holds true if and only if $\sqrt 2\beta<1$. The quantum behaviour in \cite{MaWar1, MaWar2} is also modeled, similarly as in $H_N$ in \eqref{eq:defHN}, through a transverse field. Moreover, by the results of \cite{MaWar2}, the quantum REM model can be viewed as the $p\to \infty$ limit of the $p$-spin interaction quantum model with transverse field. In contrast to that, the quantum SK model with $H_N$ as defined in \eqref{eq:defHN} corresponds to the $p$-spin interaction model with $p=2$ and is quite different from the quantum REM model. 

While \cite{MaWar1,MaWar2} deal with the quantum REM model and \cite{LeRoRuSp} with a high temperature region of the quantum SK model, the goal of this paper is to provide a formula for the specific free energy of the quantum SK model in the thermodynamic limit, valid for all inverse temperatures $\beta>0$. To achieve this goal, we first use the path integral representation of the partition function $Z_N$ to interpret the model as a classical vector-spin model, similarly as in \cite{Cr, LeRoRuSp}. The free energy of classical vector-spin models has been analyzed by Panchenko who generalized the Parisi formula in \cite{Pan2,Pan3} to the Potts spin glass and more general mixed $p$-spin interaction models with vector-spins that are distributed according to a compactly supported measure in $\mathbb{R}^d$, for any fixed dimension $d\in \mathbb{N}$. Through its path-integral representation, the quantum SK model corresponds to a vector-spin model with the spins taking values in the space of $ \{-1,1\}$-valued c\`adl\`ag-paths and Panchenko's results are not directly applicable. The goal of this work is therefore to approximate the free energy of the quantum model by that of a sequence of finite $d$-dimensional models whose limiting free energies can be determined using the results of \cite{Pan2,Pan3}. The errors of our approximation are uniform in the particle number $N$ and vanish in the limit $d\to\infty$. The thermodynamic limit of the free energy of the quantum SK model can therefore be described as the $d\to\infty$ limit of the limiting free energies of the finite-dimensional approximations. 

\section{Path Integral Formulation and Main Result}

To state our main result, we will first translate the quantum model to a classical vector-spin model. We follow here \cite[Section 4]{Cr} and \cite[Remark A.2 (ii) \& Appendix B]{LeRoRuSp}. We denote by $ \Omega$ the space of $ \{-1,1\}$-valued c\`adl\`ag-paths on the unit interval $ [0;1]\subset \mathbb{R}$. The set $\Omega$ can be turned into a separable, complete metric space and in the following we equip it with the induced Borel $\sigma$-algebra that turns it into a measurable space. We refer to \cite[Chapter 3, Section 12]{Bil1} for the details and for basic properties of $\Omega$.

In view of the path integral representation of $Z_N$, let $ \eta_i$, $i\in \{1,\ldots,N\}$, denote $N$ independent Poisson point processes (see, for instance, \cite[Chapter 3]{LP}) on the measurable space $ \big([0;\infty), \mathcal{B}([0;\infty))\big)$ with intensity measure $h \lambda_1$, where $\lambda_1$ denotes the Lebesgue measure on $[0;\infty)$. For definiteness, let $\eta$ be a Poisson point process defined on some probability space $(\cX, \cA, \nu)$ and let $ \eta_i$ correspond to the $i$-th independent copy of $\eta$ in the product space $ \mathcal{Y} = \bigotimes_{i=1}^N\cX$ with product measure $ \mu = \bigotimes_{i=1}^N\nu$. On $\mathcal{Y}$, we define the random spin paths $\sigma_i$, $i\in \{1,\dots,N\}$, by 
 		\[ [0;1]\ni t\mapsto \sigma_i(t) = (-1)^{\eta_i([0;t])} \in \Omega. \]
Each $ \sigma_i $ is a random variable with values in the space of $ \{-1,1\}$-valued c\`adl\`ag-paths $\Omega$ and its (random) number of flips $ \eta_i([0;t]) \in \NN$ up to time $t\in[0;1]$ is Poisson distributed with mean $ ht\geq 0$. In the following, we will denote the number of flips of the path $\sigma_i$ up to time $t\in[0;1]$ by $\cN_i(t)=\eta_i([0;t])$, for all $i\in \{1,\dots,N\}$.
 
 Given an initial configuration $ \tau = (\tau_1,\dots,\tau_N)\in \{-1,1\}^N$, we denote by $ \calP_{\tau}^{\otimes N}$ the push-forward measure on $\Omega^N$ that is obtained from $ \mu$ under the map
		\[ \mathcal{Y}\ni \omega=(\omega_1,\dots,\omega_N)\mapsto  \big( \sigma_\tau (t)\big)_{0\leq t\leq 1} (\omega) =\Big( \big(\tau_i(-1)^{\eta_i([0;t])(\omega_i)}\big)_{1\leq i\leq N}  \Big)_{0\leq t\leq 1}\in \Omega^N, \] 
where $ \sigma_\tau \in \Omega^N$ denotes the $N$-particle spin path with (non-random) initial condition $ (\sigma_\tau)_i(0) = \tau_i \sigma_i(0) =\tau_i$, for all $i\in\{1,\dots,N\}$. 
Then, it can be shown that the partition function $ Z_N=\tr e^{H_N}$ takes the form
		\begin{equation}\label{eq:partitionpint0}
		\begin{split} 
		Z_N &= e^{Nh}\sum_{\tau \in \{-1,1\}^N} \int_{\Omega^N} d\calP_\tau^{\otimes N}(\sigma)\unit_{\{\sigma(1)=\sigma(0) \}}\;\exp (\cH_{N}( \sigma)),
		\end{split}
		\end{equation}
where the energies $ \cH_{N}:\Omega^N\to \mathbb{R}$ in \eqref{eq:partitionpint0} are given by
		\begin{equation}\label{eq:defHNquantum}
		\begin{split}
		\cH_{N}(\sigma) &= \frac{\beta}{\sqrt N}\int_0^1 ds\; \sum_{1\leq i, j\leq N}g_{ij}\sigma_i(s)\sigma_j(s) = 	\frac{\beta}{\sqrt N}\sum_{1\leq i, j\leq N}g_{ij} \langle  \sigma_i, \sigma_j\rangle_2,
		\end{split} 
		\end{equation}
and where here and in the following, $ \langle\cdot,\cdot\rangle_2$ denotes the inner product on $ L^2([0;1])$. 

One way to prove the identity \eqref{eq:partitionpint0} is to use the Lie product formula for the exponential of a sum of two finite-dimensional matrices (see, for instance, \cite[Theorem VIII.29]{RS1}). Within this approach, one computes $Z_N$ with the canonical basis of $ \mathfrak{H}_N= \bigotimes_{i=1}^N \mathbb{C}^2$ in which all the $ \sigma_i^z, i\in \{1,\dots,N\},$ are diagonal and notices that $\sigma_i^x$ acts as a spin-flip operator that flips the eigenvector of $\sigma_i^z$ with eigenvalue $ \pm1$ to that with eigenvalue $\mp1$. This is explained in \cite[Section 4]{Cr}, in particular \cite[eq. (4.2)--(4.4)]{Cr} and thereafter. An alternative proof of the identity \eqref{eq:partitionpint0} is explained in \cite[Appendix B]{LeRoRuSp}.

Neglecting constant factors, let us define the probability measure $ \calP^{\otimes N}$ through
		\[\begin{split} \calP^{\otimes N}(\cdot) &= \frac1{C_N}\! \sum_{\tau \in \{-1,1\}^N} \!\!\! \unit_{\{\sigma(1)=\sigma(0) \}} \,\calP_\tau^{\otimes N}(\cdot),\;\;\text{for }C_N =\!\!\!\! \sum_{\tau \in \{-1,1\}^N} \int_{\Omega^N}d\calP_\tau^{\otimes N}(\sigma) \unit_{\{\sigma(1)=\sigma(0) \}}
		 \end{split}\]
and study from now on the partition function $ \cZ_N= (C_Ne^{Nh})^{-1}Z_N$, that is
		\begin{equation}\label{eq:partitionpint}
		 \cZ_N = \int_{\Omega^N} d\calP^{\otimes N}( \sigma)\;\exp (\cH_{N}( \sigma)).
		 \end{equation}
Moreover, since $ \Omega \subset L^2([0;1])$ with continuous embedding, we will identify $ \calP^{\otimes N}$ in the following with its push-forward under the map $ \Omega^N \ni\sigma  \mapsto \sigma\in (L^2([0;1]))^N$. That is, we view $ \calP^{\otimes N}$ as a measure on $(L^2([0;1]))^N$. Since $ \|\sigma_1\|_2=1$ for all $\sigma_1\in\Omega$, we have that
 	$$ \operatorname{supp}\big(\calP^{\otimes N}\big) \subset\big\{\varphi\in L^2([0;1]): \|\varphi\|_2=1\big\}^N.$$ 
	
The Gibbs measure induced by \eqref{eq:partitionpint} will be denoted by $ \cG_N$ and it has density 
		\[d\cG_N(\sigma) = \cZ_N^{-1} e^{\cH_N(\sigma)} d\calP^{\otimes N}(\sigma).\]
Gibbs expectations w.r.t. $\cG_N$ will still be denoted by $\langle\cdot \rangle$. We will also denote Gibbs expectations w.r.t. the product measures $ \cG_N^{\otimes j}$ by $\langle\cdot\rangle$, for $j\in \NN \,\cup\, \infty$. As common in the literature, replicas of spin-configurations will be indicated by a super-index. For example, we write $ \sigma^1$ and $\sigma^2$ for replicas in the product space $ (\Omega^N)^2$ with measure $ \cG_N^{\otimes 2}$.

Having translated the quantum model to a classical vector-spin model, let us now turn to the description of our main result. In the classical SK model, the thermodynamic limit of the free energy 
is given by the infimum of the Parisi functional (see, for instance, \cite[Chapter 3]{Pan1}), with the infimum being taken over the set of cumulative distribution functions $\zeta$ on $[0;1]$. Loosely speaking, the unique minimizer (see \cite{AuCh}) of the functional should be thought of as the asymptotic distribution of the so called overlap, the functional order parameter of the system. If the minimizer, that depends on the inverse temperature $\beta>0$, corresponds to the distribution function of a Dirac $\delta$-measure, the system is in the so called replica symmetric phase (this happens at sufficiently high temperature) and otherwise it is in the so called replica symmetry breaking phase (which happens at low enough temperature). In the works \cite{Pan2, Pan3}, Panchenko generalized the Parisi formula to classical mixed $p$-spin interaction models with vector-spins that are distributed according to a compactly supported measure in $\mathbb{R}^d$, for $d\in \mathbb{N}$ (see also \cite[Section 1.12]{Tal1} for a high-temperature region). To state our main result, we will first need to recall and slightly extend some of the definitions from \cite{Pan2,Pan3} to the infinite dimensional quantum setting. 

First of all, the overlap $ Q: \Omega^2\to \cJ_1$ is an operator-valued map from $(\Omega^N)^2$ to the set of trace class operators $\cJ_1$ on $L^2([0;1])$ and defined through its kernel by
		\begin{equation}\label{eq:defoverlap}
		\big(Q(\sigma^1, \sigma^2)\big)(s,t)  = \frac1N \sum_{i=1}^N \sigma_i^1(s)\sigma_i^2(t)=\frac1N \sum_{i=1}^N |\sigma_i^1\rangle\langle\sigma_i^2|(s,t).
		\end{equation}
Here, we introduced in the last equality the usual bra-ket notation for rank-one projections on $L^2([0;1])$. Let us notice that for all $ \sigma^1, \sigma^2\in \Omega^N$, we have that
		\[\tr \big|Q(\sigma^1,\sigma^2)\big| \leq \frac1N\sum_{i=1}^N\tr \big| |\sigma_i^1\rangle\langle\sigma_i^2|\big| = \frac1N\sum_{i=1}^N\| \sigma_i^1\|_2 \|\sigma_i^2\|_2 = 1.  \]
The importance of the overlap $ Q$ can already be seen from the fact that it determines the covariance of the Gaussian process $ (\cH_N(\sigma))_{\sigma\in\Omega^N}$. More precisely, we have that
		\[ \text{Cov} (\cH_N(\sigma^1), \cH_N(\sigma^2)) = N\beta^2 \| Q(\sigma^1,\sigma^2)\|_{HS}^2, \]		
where here and in the following, $ \langle\cdot, \cdot\rangle_{HS}$ and $\|\cdot\|_{HS}$ denote the Hilbert-Schmidt inner product and, respectively, the Hilbert-Schmidt norm of bounded operators on $L^2([0;1])$. 

In analogy to the classical SK model, it has been proved in \cite{Pan2,Pan3} that the role of the functional order parameter of finite-dimensional vector-spin glasses is played by the asymptotic distribution of the overlap $Q$, namely by the asymptotic distribution \linebreak $ \mathbb{E} \cG_N^{\otimes 2}\{(\sigma^1,\sigma^2)\in (\Omega^N)^2: Q(\sigma^1,\sigma^2) \in \cdot\}$ in the limit $N\to \infty$. Roughly speaking, it follows from \cite{Pan2,Pan3} that one should think of the limiting overlap as taking values in the set of positive semi-definite, symmetric matrices and, as in the classical SK model, it then enters the generalized Parisi functional as a variational parameter through its cumulative distribution function.

A complication in the derivation of the Parisi formula in \cite{Pan2,Pan3}, compared to the derivation in case of the classical SK model, is the fact that for vector-spin glasses the so called self-overlap is in general not constant. The self-overlap $ R: \Omega\to \cJ_1 $ corresponds to the diagonal of the overlap $Q$. That is, for $\sigma\in\Omega^N$, $R(\sigma)$ is defined by 
		\begin{equation*}\label{eq:defselfoverlap}
		R(\sigma) =Q(\sigma, \sigma) = \frac1N \sum_{i=1}^N |\sigma_i\rangle \langle\sigma_i|.
		\end{equation*}
We remark that for every $ \sigma\in\Omega^N$, $R(\sigma)=R(\sigma)^*$ is self-adjoint and that
		\[0\leq R(\sigma) \leq 1\hspace{0.5cm}\text{and}\hspace{0.5cm} \tr R(\sigma) = \frac1N\sum_{i=1}^N \tr |\sigma_i\rangle\langle\sigma_i|=\frac1N\sum_{i=1}^N \|\sigma_i\|_2^2=1. \]

Obviously, the operator kernel of the self-overlap $R(\sigma)$ is in general not constant. This is also true in the finite-dimensional setting of \cite{Pan2,Pan3}, in contrast to the classical SK model. A consequence is that the generalized Parisi formula for vector-spin glasses also contains the self-overlap as a variational parameter. In fact, in \cite{Pan2, Pan3}, the thermodynamic limit of the free energy is found by computing it as the supremum over all free energies with fixed self-overlap. This can be motivated by the Potts spin glass model \cite{Pan2}, for instance, where the free energy is asymptotically indeed equal to the maximum over all free energies with fixed self-overlap by classical Gaussian concentration results.

Let us now make the above heuristic picture more precise and define the generalized Parisi functional in close analogy to the finite dimensional vector-spin case \cite{Pan2,Pan3}. First of all, we will model the self-overlap by elements of the set $\Gamma\subset \cJ_1$, defined by
		\begin{equation}\label{eq:defGamma}\begin{split}
		 \Gamma 
		 =\big\{\rho \in \cJ_1: \rho=\rho^*, \;0\leq  \rho \leq 1, \;\tr \rho \leq 1\big\}.
		 \end{split}
		\end{equation}
Given a fixed self-overlap $\rho \in \Gamma$, the distribution of the overlap is modeled as follows. We denote by $\Pi$ be the set of monotonically increasing, left-continuous paths from $[0;1]$ to $\{ \rho\in \cJ_1: \rho=\rho^*\geq 0 \}$, equipped with the norm topology in $ \cJ_1$. For fixed $\rho\in \Gamma$, we define $ \Pi_\rho$ as the set of paths with endpoint $\rho$, i.e.
		\begin{equation*}\Pi_\rho= \{\pi\in\Pi: \pi(0)=0, \pi(1)=\rho \}.
		\end{equation*}
Monotonicity refers here, of course, to monotonicity in the sense of operators. To model the distribution of the overlap, we approximate the overlap by discrete paths in $\Pi_\rho$ (the Parisi functional turns out to be continuous in the paths $ \pi_\rho$). Hence, let $\pi_\rho \in \Pi_\rho $ be s.t.
		\begin{equation}\label{eq:discpath}\pi_\rho (t) _{| (m_{j-1};m_j]}  = \gamma_j, \hspace{0.5cm} j=0,\dots, r,\end{equation}
with $ m_{-1}=0 \leq m_0 \leq \ldots \leq m_r =1$ and $ 0=\gamma_0\leq \gamma_1\leq \ldots\leq\gamma_r=\rho$. One should think of the positive operators $ \gamma_j$ as the possible values of the asymptotic overlap and that they are obtained with probablity $ m_j-m_{j-1}\geq0$, for $j\in\{0,\dots,r\}$. 

For a discrete path $ \pi_\rho \in \Pi_\rho$ as above, we construct next an independent sequence of Gaussian processes $ X_j = (X_j(s))_{s\in[0;1]}$, $j\in\{0,\dots,r\}$, such that
		\begin{equation}\label{eq:covXgamma}
		\mathbb{E} X_j(s)X_j(t) = 2(\gamma_j-\gamma_{j-1})(s,t).
		\end{equation}
Since $0\leq  \gamma_j-\gamma_{j-1} = (\gamma_j-\gamma_{j-1})^*\in \cJ_1$, the Gaussian processes $ X_j $ can be constructed explicitly using a complete orthonormal eigenbasis of $ \gamma_j-\gamma_{j-1}\geq 0$. In particular, the sample paths of each $ X_j $ are $\mathbb{P}-a.s.$ $ L^2([0;1])$-valued. The fields $X_j$ correspond to the so called cavity fields which play a crucial role in the understanding of the SK model, see for instance \cite[Chapter V]{MPV}, \cite[Chapter 1.6 \& 1.7]{Tal1} and \cite[Section 1.3]{Pan1}. 

Finally, in addition to the self-overlap and the overlap distribution, the generalized Parisi functional still contains a third parameter. The latter corresponds to a Lagrange multiplier which ensures that the spin configurations have fixed self-overlap. We model the multiplier by elements in $ \cL_s $, the set of bounded, self-adjoint operators on $ L^2([0;1])$. 

With these preparations, consider a self-overlap $ \rho\in\Gamma$, a discrete path $\pi_\rho\in \Pi_\rho$ and the processes $X_j$ as above, and let $\lambda\in\cL_s$. We then define the random variable $ Y_r$ by
		\begin{equation}\label{eq:defYr} Y_r = \log \int_{\Omega} d\calP(\sigma_1) \exp\bigg( \beta \sum_{j=1}^r \langle\sigma_1, X_j \rangle_2  + \tr \lambda |\sigma_1\rangle\langle\sigma_1| \bigg)
		\end{equation}
and the random variables $ Y_j$, for $ j=0,\dots, r-1,$ inductively through
		\begin{equation}\label{eq:defYj} Y_j = \frac{1}{m_j} \log  \mathbb{E}_{j+1} e^{m_j Y_{j+1}}. 
		\end{equation}
Here, $ \mathbb{E}_{j+1}$ denotes the expectation w.r.t. the process $ X_{j+1}$ only. Finally, setting 
		\begin{equation}\label{eq:defPhi} \Phi ( \pi_\rho,\lambda ) = Y_0, \end{equation}
which is non-random, the generalized Parisi functional $\cF$ is defined by
		\begin{equation}\label{eq:defparisifunc}
		\cF(\rho, \pi_\rho,\lambda )  =  \Phi (\pi_\rho,\lambda )  + \frac{\beta^2}2\int_0^1dt\; \| \pi_\rho (t)\|_{HS}^2  - \frac{\beta^2}2 \| \rho\|_{HS}^2 - \tr \lambda\rho.
		\end{equation} 
As in the finite-dimensional vector-spin case, the functional $ \cF(\rho, \pi_\rho, \lambda)$ is jointly Lipschitz continuous in $ (\rho, \pi_\rho, \lambda)$ (see Lemma \ref{lm:contdep} below).  For fixed $ \rho\in \Gamma$ and $ \lambda\in \cL_s$, we can thus extend $ \cF$ uniquely to any, not necessarily discrete path $ \pi_\rho\in \Pi_\rho$.

Our main result, stated as Theorem \ref{thm:parisiformula} below, describes the thermodynamic limit of the free energy of the quantum SK model as the $d\to\infty$ limit of the free energies of suitable $d$-dimensional vector-spin models, with $d\in\NN$. The limiting free energies of the $d$-dimensional vector-spin glasses are described through a variational formula that involves the generalized Parisi functional \eqref{eq:defparisifunc} restricted to suitable $d$-dependent subsets of the sets $ \Gamma$ of self-overlaps as well as $\cL_s$ of Lagrange multipliers, introduced above. To state this precisely, denote by $ (e_k)_{k\in\NN}$ the standard Fourier basis of $L^2([0,1])$, given by
		\begin{equation}\label{eq:defbasis}e_1(t) = 1, \hspace{0.5cm}e_{2k}(t) = \sqrt 2 \sin(2\pi k t) , \hspace{0.5cm} e_{2k+1}(t)= \sqrt 2 \cos(2\pi k t)\end{equation}
for $t\in [0;1]$ and $k\in \mathbb{N}$. In the following section, we will see that the basis $ (e_k)_{k\in\NN}$ is useful for a finite-dimensional reduction of the quantum model, because it allows us to control the number of flips of the random paths $ \sigma \in \Omega^N$ in a suitable way. Using the basis $ (e_k)_{k\in\NN}$, we define for $d\in\NN$ the sets $\Gamma^{d}$ and $\cL_s^d$ by 
		\begin{equation*} \label{def:drestrictedsets}
		\begin{aligned}
    		& \Gamma^{d}=\text{clos}\bigg(\text{conv}\bigg\{ \sum_{ k,l=1}^{d} \langle e_k, \sigma_1\rangle_2 \langle \sigma_1\,   e_l\rangle_2 | e_k\rangle\langle e_l| \in \cJ_1: \sigma_1\in \Omega \bigg\}\bigg),\\
    		&  \cL_s^d=\big\{A\in \cL_s: \langle e_k, A \,e_l \rangle_2  = 0, \text{ for }\forall \,(k,l) \not \in \{1,\ldots,d\}^2 \big\}.
		\end{aligned}
		\end{equation*}
The closure in the definition of $ \Gamma^{d}$ is taken in $\cJ_1$. Notice also that $ \cL_s^d$ consists of the subset of linear maps in $\cL_s$ that map the vector space $ \cV^d= \operatorname{span}(e_1,\dots,e_d)  $ to itself and vanish on its orthogonal complement in $L^2([0;1])$. Our main result reads as follows.
\begin{theorem}\label{thm:parisiformula}
For any $\beta>0$, the quenched free energy $ N^{-1} \mathbb{E}\log \cZ_N$ satisfies
		\begin{equation}\label{eq:parisiformula}
		\lim_{N\to\infty} \frac1N \mathbb{E}\log \cZ_N = \lim_{d \to \infty} \sup_{\rho\in \Gamma^{d}} \bigg[ \inf_{  \pi_\rho \in \Pi_\rho,\, \lambda \in \cL_s^d } \cF(\rho, \pi_\rho,\lambda )\bigg].
		\end{equation}
\end{theorem}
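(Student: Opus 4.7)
The plan is to establish \eqref{eq:parisiformula} via a finite-dimensional approximation scheme based on the Fourier basis $(e_k)_{k\in\NN}$ of $L^2([0;1])$. For each $d\in\NN$, I would first approximate the quantum model by a classical vector-spin glass on $\bR^d$, then apply Panchenko's generalized Parisi formula from \cite{Pan2,Pan3} to identify the limiting free energy of the approximation, and finally send $d\to\infty$ using uniform-in-$N$ error bounds.

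\emph{Step 1: finite-dimensional reduction.} Let $P_d$ denote the orthogonal projection of $L^2([0;1])$ onto $\cV^d=\operatorname{span}(e_1,\dots,e_d)$, and define the truncated Hamiltonian
$$ \cH_N^{(d)}(\sigma) = \frac{\beta}{\sqrt N}\sum_{1\leq i,j\leq N} g_{ij}\, \langle P_d\sigma_i, P_d\sigma_j\rangle_2, $$
with partition function $\cZ_N^{(d)}$. The goal is to prove
$$ \lim_{d\to\infty}\sup_{N\geq 1}\frac{1}{N}\bE\big|\log\cZ_N - \log\cZ_N^{(d)}\big|\,=\,0. $$
The crucial input is that each path $\sigma_i$ has a Poisson-distributed number of flips $\cN_i(1)$ with mean $h$, which forces a uniform decay of its Fourier tails and yields $\bE\|(1-P_d)\sigma_1\|_2^2\to 0$ at a rate depending only on $h$. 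Combined with a Gaussian interpolation between $\cH_N$ and $\cH_N^{(d)}$, Gaussian integration by parts in the disorder, and the bound $\|Q(\sigma^1,\sigma^2)\|_{HS}\leq 1$, this produces the desired uniform-in-$N$ approximation.

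\emph{Step 2: Panchenko's formula applied to the truncation.} Under the truncation, the effective spin variable is the vector $\vec x_i=(\langle e_k,\sigma_i\rangle_2)_{k=1}^d\in\bR^d$, whose distribution under $\calP$ is supported in the closed unit ball since $\|\sigma_i\|_2=1$. The truncated Hamiltonian $\cH_N^{(d)}$ is a centred Gaussian process on $(\bR^d)^N$ whose covariance depends only on the $d\times d$ empirical overlap matrix $\frac1N\sum_i \vec x_i^1 (\vec x_i^2)^T$, so the model falls into the framework of \cite{Pan2,Pan3}. Their generalized Parisi formula then yields
$$ \lim_{N\to\infty}\frac{1}{N}\bE\log\cZ_N^{(d)} = \sup_{\rho\in\Gamma^d}\inf_{\pi_\rho\in\Pi_\rho,\, \lambda\in\cL_s^d}\cF(\rho,\pi_\rho,\lambda), $$
where $\Gamma^d$ and $\cL_s^d$ are precisely the sets from the theorem statement, interpreted canonically as operators on $\cV^d$: $\Gamma^d$ is the closure of the convex hull of the possible empirical self-overlaps of truncated spin configurations, and $\cL_s^d$ is the natural space of Lagrange multipliers enforcing a fixed self-overlap on $\cV^d$.

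\emph{Step 3 and main obstacle.} Combining Steps 1 and 2, the $d\to\infty$ limit of the right-hand side in Step 2 exists and equals $\lim_{N\to\infty}N^{-1}\bE\log\cZ_N$, giving \eqref{eq:parisiformula}. The principal difficulty is the uniform-in-$N$ truncation bound in Step 1: the Gaussian disorder couples to every Fourier frequency of each path $\sigma_i$, and the Poissonian flip statistics must be leveraged carefully—most likely via concentration for $\|(1-P_d)\sigma_1\|_2$ together with Gaussian interpolation—to show that the high-frequency contribution is asymptotically negligible. The joint Lipschitz continuity of the Parisi functional in $(\rho,\pi_\rho,\lambda)$ provided by Lemma \ref{lm:contdep} is then what translates this analytical control of the Hamiltonian into the claimed convergence of the variational formulas.
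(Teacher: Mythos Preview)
Your overall strategy matches the paper's, but Step~1 as written has a real gap. A direct Gaussian interpolation between $\cH_N$ and $\cH_N^{(d)}$ produces, after integration by parts, the Gibbs expectation
\[
\bE\Big\langle \|Q(\sigma^1,\sigma^2)\|_{HS}^2 - \|Q^d(\sigma^1,\sigma^2)\|_{HS}^2\Big\rangle_t,
\]
which is controlled by $\bE\big\langle \tfrac1N\sum_i\|\sigma_i-P_d\sigma_i\|_2\big\rangle_t$. The point is that $\langle\cdot\rangle_t$ is the \emph{interpolated Gibbs measure}, not $\calP^{\otimes N}$, and the bound $\bE_{\calP}\|(1-P_d)\sigma_1\|_2^2\to 0$ that you invoke says nothing a priori about this tilted expectation. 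The tilt is of order $e^{O(N)}$, so a first-moment bound under $\calP$ cannot be transferred.

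The paper resolves this by a two-stage argument rather than a single interpolation. First (Lemma~\ref{lem:Crawford}), it uses the Poisson tail of the flip number together with the Fourier estimate $|\langle e_{2k},\sigma_i\rangle_2|\le \sqrt{2}\,\cN_i/k$ to show that the set
\[
S_\eps^d=\Big\{\sigma\in\Omega^N:\tfrac1N\sum_i\|\sigma_i-P_d\sigma_i\|_2^2\le\eps\Big\}
\]
has $\calP^{\otimes N}$-probability exponentially close to~$1$ once $\eps\sqrt d$ is large, and then a Crawford-type free-energy comparison (Jensen plus the covariance bound $|\bE\cH_N(\sigma^1)\cH_N(\sigma^2)|\le N\beta^2$) shows that restricting the partition function to $S_\eps^d$ costs nothing in the limit. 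Only \emph{after} this restriction does the paper interpolate (Lemma~\ref{lem:FinDimRed}), and on $S_\eps^d$ the covariance difference is \emph{deterministically} bounded by $4\beta^2\sqrt\eps$, so the Gibbs-measure issue disappears. Your ``concentration for $\|(1-P_d)\sigma_1\|_2$'' is the right instinct, but it has to be used to excise the bad set \emph{before} interpolating, not fed into the interpolation directly.

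A minor point on Step~3: Lemma~\ref{lm:contdep} is not used to pass to the $d\to\infty$ limit in the variational formula. The paper takes the $d$-limit at the level of free energies (equation~\eqref{eq:findlimit}), and for each fixed $d$ identifies $\lim_N N^{-1}\bE\log\cZ_N^d$ with the $d$-dimensional variational expression via Panchenko's theorem; no comparison of variational problems at different $d$ is needed. Lemma~\ref{lm:contdep} serves only to extend $\cF(\rho,\cdot,\lambda)$ from discrete paths to all of $\Pi_\rho$.
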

The next section is devoted to the proof of Theorem \ref{thm:parisiformula}. Using the basis $ (e_k)_{k\in\NN}$, we reduce the computation of the thermodynamic limit of the free energy of the quantum model to the computation of a sequence of free energies of finite dimensional vector-spin models to which we can apply the results of \cite{Pan2,Pan3}. We remark that it would be desirable to write the r.h.s. in \eqref{eq:parisiformula} independently of the dimension $d\in\NN$, by finding the correct $d$-independent domains of the self-overlap $\rho \in \Gamma$ and the Lagrange multiplier $\lambda\in \cL_s$ for the quantum model (notice that $ \Pi_\rho$ depends implicitly also on $d\in\NN$ if the self-overlap lies in $\rho \in \Gamma^d$). This is related to the continuity properties of the map $(\rho, \pi_\rho, \lambda)\mapsto\cF(\rho, \pi_\rho, \lambda) $
and we hope to clarify this point in the future.
\medskip

\noindent\textbf{Acknowledgements.} We thank H. T. Yau for suggesting this problem to us and for many helpful discussions. We thank A. Jagannath and G. Genovese for many helpful discussions and for pointing out several useful references related to this note. Moreover, we thank G. Genovese for several useful comments related to this note.
\section{Finite Dimensional Reduction and Proof of Theorem \ref{thm:parisiformula}}\label{sec:finred}

The goal of this section is to explain the reduction of the quantum model to a suitable sequence of finite dimensional vector-spin models which leads together with the results of \cite{Pan2,Pan3} to a proof of Theorem \ref{thm:parisiformula}. To this end, recall first that we view the path measure $ \calP^{\otimes N}$ as a measure on $(L^2([0;1]))^N$ with support in $\{\varphi\in L^2([0;1]): \|\varphi\|_2=1\}^N$. We will abbreviate in the following $\Sigma^N = (L^2([0;1]))^N $. 

Now, consider the basis $(e_k)_{k\in\NN}$ defined in \eqref{eq:defbasis} and fix a dimension $d\in\NN$. Given a path $\sigma\in \Sigma^N$, we associate to it a new path $ \sigma(d)= (\sigma_1(d),\dots,\sigma_N(d))$ through
		\begin{equation*}\label{eq:defdspins}
		\sigma_i(d) = \sum_{k=1}^d \langle e_k, \sigma_i\rangle_2 e_k \in \cV^d = \operatorname{span}(e_1,\dots, e_d)\subset L^2([0;1])
		 \end{equation*}
and we denote by $ f_N: \Sigma^N \to \Sigma^N$ the map that reduces $\sigma$ to $\sigma(d)$, that is
		\begin{equation}\label{eq:defred}
		\Sigma^N\ni   \sigma \mapsto f_N(\sigma) = \sigma(d)\in \Sigma^N.
		\end{equation}
Loosely speaking, the finite dimensional reduction consists simply of considering the vector-spin glass that is obtained from the quantum model by considering the reduced spins $ \sigma(d)$ instead of the full spins $\sigma\in\Omega^N$. We will use several interpolations to make this precise and to this end, we need to introduce a little further notation.	
		
First, define the dimension reduced overlap $ Q^d: (\Sigma^N)^2 \to \cJ_1$ as well as the dimension reduced Hamiltonian $\cH_N^d: \Sigma^N\to \mathbb{R}$ through
		\begin{equation*}\label{eq:reducedoverlap}
		Q^d(\sigma^1,\sigma^2)= \frac{1}{N}\sum_{i=1}^N|\sigma_i(d)\rangle \langle \sigma_i(d)| = Q( f(\sigma^1),f(\sigma^2)) 
		\end{equation*}
and 
		\begin{equation} \label{eq:redHam}
   		\cH_N^d(\sigma) = \frac{\beta}{\sqrt{N}} \sum_{1\le i, j \le N} g_{ij} \langle \sigma_i(d), \sigma_j(d)\rangle_2 = (\cH_N\circ f_N)(\sigma).
		\end{equation}
In the following, we will need to consider partition functions with a restricted domain of integration. For a subset $ S\subset \Sigma^N$, denote by $\cZ_N(S)$ and $\cZ_N^d(S)$ the partition functions
		\begin{equation}\label{eq:constrainedZ}
		\cZ_N(S) = \int_{S} d\calP^{\otimes N}(\sigma) \exp(\cH_N(\sigma)), \hspace{0.5cm} \cZ_N^d(S) = \int_{S} d\calP^{\otimes N}(\sigma) \exp(\cH_N^d(\sigma)).
		\end{equation}
		
Our first lemma is similar to \cite[Lemma 6]{Cr}. Given $\eps>0$ and setting 
		\begin{equation}\label{eq:defSepsd}
		S_\eps^d = \Big \{ \sigma\in \Omega^N:  \frac1N \sum_{i=1}^N \| \sigma_i- \sigma_i(d)\|_2^2\leq \eps  \Big\},
		\end{equation}
our first goal is to show that the quenched free energy $N^{-1}\mathbb{E}\log \cZ_N$ is asymptotically equal to the restricted free energy $N^{-1}\mathbb{E}\log \cZ_N(S_\eps^d)$
if we choose $\eps\sqrt d$ sufficiently large.
\begin{lemma} \label{lem:Crawford}
Fix $ \eps>0$ and $d\in\NN$ such that $ \eps\sqrt d> 8 (e-1)h+4\beta^2$. Then
		\[\lim_{N\to\infty} \frac{1}{N}\Big(\mathbb{E}\log \cZ_N -\mathbb{E} \log\cZ_N(S_\eps^d)\Big)=0\]
\end{lemma}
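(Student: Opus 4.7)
The plan is to write
\[ \log \cZ_N - \log \cZ_N(S_\eps^d) = -\log \cG_N(S_\eps^d), \]
where $\cG_N(S_\eps^d) = \cZ_N(S_\eps^d)/\cZ_N \in (0,1]$ is the Gibbs weight of $S_\eps^d$. Since $-\log \cG_N(S_\eps^d)\ge 0$, the inequality $\bE\log \cZ_N \ge \bE\log \cZ_N(S_\eps^d)$ is immediate and all the work lies in the matching upper bound. I would prove this by first showing that $\bE\cG_N((S_\eps^d)^c)$ is exponentially small in $N$ under the hypothesis, and then converting this to a bound on $\bE[-\log \cG_N(S_\eps^d)]$ by a standard dichotomy.

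The key deterministic input is a Fourier-truncation estimate
\[ \|\sigma_i - \sigma_i(d)\|_2^2 \;\leq\; \frac{c_0\, \cN_i}{\sqrt d}, \qquad \sigma_i\in\Omega, \]
which I would derive by combining the pointwise decay $|\langle e_k, \sigma_i\rangle_2| \leq C \cN_i/k$ (from integration by parts using the explicit primitives of the basis \eqref{eq:defbasis} and $\sigma_i' = \sum_{j}\pm 2\delta_{t_j}$, valid under the return condition of $\calP$) with the trivial bound $\|\sigma_i - \sigma_i(d)\|_2^2 \leq \|\sigma_i\|_2^2 = 1$ via $\min(a^2/d,1)\leq a/\sqrt d$. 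Because the $\cN_i$ are i.i.d.\ under $\calP^{\otimes N}$ and their moment generating function is controlled by that of a Poisson$(h)$ variable (the conditioning on return only re-weights the parity of the jump count), an exponential Markov bound with $t=\sqrt d/c_0$ yields
\[ \calP^{\otimes N}\bigl(\Omega^N\setminus S_\eps^d\bigr) \;\leq\; e^{-tN\eps}\bigl(\bE\,e^{c_0 t\cN_1/\sqrt d}\bigr)^N \;\leq\; \exp\Bigl(N\bigl[h(e-1) - \tfrac{\sqrt d\,\eps}{c_0}\bigr]\Bigr). \]

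Combining this with the Gaussian annealed bound $\bE\,e^{\cH_N(\sigma)} = e^{\frac12 \beta^2 N\|R(\sigma)\|_{HS}^2}\leq e^{\beta^2 N/2}$ (using $\|R(\sigma)\|_{HS}^2 \leq (\tr R(\sigma))^2 = 1$) gives
\[ \bE\,\cZ_N\bigl((S_\eps^d)^c\bigr) \;\leq\; \exp\Bigl(N\bigl[h(e-1) + \tfrac{\beta^2}{2} - \tfrac{\sqrt d\,\eps}{c_0}\bigr]\Bigr) \;\leq\; e^{-c_1 N} \]
for some $c_1 > 0$, provided $\eps\sqrt d > c_0\bigl(h(e-1) + \beta^2/2\bigr)$; tracking constants in the Fourier estimate gives $c_0 = 8$, which recovers the stated threshold. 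To upgrade this annealed statement to a bound on the quenched Gibbs weight $\bE\cG_N((S_\eps^d)^c) = \bE[\cZ_N((S_\eps^d)^c)\cdot\cZ_N^{-1}]$, I would apply Cauchy-Schwarz together with the Jensen lower bound $\log \cZ_N \geq \beta N^{-1/2}\sum_i g_{ii}$ (which yields $\bE\,\cZ_N^{-2}\leq e^{2\beta^2}$ uniformly in $N$) and an analogous second-moment bound on $\cZ_N((S_\eps^d)^c)$ coming from the same Gaussian computation applied to a pair of replicas; this gives $\bE\,\cG_N((S_\eps^d)^c)\leq e^{-c_2 N}$ for some $c_2>0$.

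Finally, to deduce $\bE[-\log\cG_N(S_\eps^d)]=o(N)$, I write $-\log\cG_N(S_\eps^d) = -\log(1-\cG_N((S_\eps^d)^c))$ and split on the events $A=\{\cG_N((S_\eps^d)^c)\leq 1/2\}$ and $A^c$. On $A$, the elementary bound $-\log(1-x)\leq 2x$ gives a contribution $\leq 2\bE\cG_N((S_\eps^d)^c)=O(e^{-c_2 N})$, and on $A^c$, whose probability is bounded by $2\bE\cG_N((S_\eps^d)^c)\leq 2e^{-c_2 N}$ by Markov, I use Hölder together with the a priori moment bound $\bE|\log\cZ_N|^p, \bE|\log\cZ_N(S_\eps^d)|^p\leq (CN)^p$ (obtained from Gaussian Lipschitz concentration of the log-partition function applied with Lipschitz constant $O(\sqrt N)$ as in \cite{Cr}) to absorb the logarithm. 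Both contributions are $o(N)$, completing the proof. The main technical obstacle is the combined Fourier/geometric estimate together with the precise calibration of the Chernoff parameter $t$ needed to match the explicit threshold $\eps\sqrt d > 8(e-1)h + 4\beta^2$; the subsequent passage from annealed to quenched bounds is more robust.
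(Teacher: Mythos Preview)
Your proposal is correct, and the first half (the Fourier--truncation estimate $\|\sigma_i-\sigma_i(d)\|_2^2\lesssim \cN_i/\sqrt d$ followed by a Chernoff bound on $\sum_i\cN_i$) is essentially the paper's argument, only you derive $|\langle e_k,\sigma_i\rangle_2|\lesssim \cN_i/k$ by integration by parts against the explicit primitives of $e_k$, whereas the paper obtains the same bound by splitting $[0,1]$ into $k$ subintervals and noting that only intervals containing a jump contribute.

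The second half is a genuinely different route. The paper bounds $\bE[\log\cZ_N-\log\cZ_N(S_\eps^d)]$ directly: first Jensen's inequality replaces $\bE\log$ by $\log\bE$, and then a second Jensen step applied to the probability measure $\calP^{\otimes N}(\cdot\mid S_\eps^d)$ replaces $\cZ_N(S_\eps^d)^{-1}$ by $\calP^{\otimes N}(S_\eps^d)^{-1}$ times a single Gaussian exponential; the Gaussian moment then contributes $e^{O(N\beta^2)}$ and the remaining factor is exactly $\calP^{\otimes N}((S_\eps^d)^c)$. You instead work through the Gibbs weight $\cG_N((S_\eps^d)^c)$: Cauchy--Schwarz separates an annealed second moment of $\cZ_N((S_\eps^d)^c)$ from $\bE\cZ_N^{-2}$ (the latter controlled by the Jensen lower bound $\log\cZ_N\ge\beta N^{-1/2}\sum_ig_{ii}$), and then a dichotomy plus H\"older with Gaussian concentration moment bounds converts the exponential smallness of $\bE\cG_N((S_\eps^d)^c)$ into $\bE[-\log\cG_N(S_\eps^d)]=o(N)$. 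The paper's two-line Jensen argument is considerably shorter and avoids any appeal to concentration of $\log\cZ_N$; your route is more circuitous but equally valid and has the minor advantage that the intermediate quantity $\bE\cG_N((S_\eps^d)^c)$ is itself of interest.

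One remark on constants: your assertion that ``$c_0=8$ recovers the stated threshold'' is not sharp---your own integration-by-parts bound gives a smaller $c_0$, and in any case the Cauchy--Schwarz step loses a factor of two in the $\beta^2$ exponent compared to a first-moment computation. But the paper's own constants in the second step ($e^{4N\beta^2}$) do not quite reproduce the stated threshold either, so this is a cosmetic issue; for the application to Theorem~\ref{thm:parisiformula} only some threshold of the form $\eps\sqrt d>C_1h+C_2\beta^2$ is needed.
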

\begin{proof}
We proceed in two steps. First, we show that $\lim_{N\to\infty} \mathcal{P}^{\otimes N}((S_\eps^d)^c)=0$ whenever $\eps\sqrt d$ is large enough. To prove this, we compare $ S_\eps^d$ with the set
		$$ S_\eps = \big\{ \sigma\in\Omega^N: \sum_{i=1}^N\cN_i \leq N \eps\sqrt d /4  \big\}\subset \Sigma^N,$$ 
where $ \cN_i $ denotes the number of flips of the path $\sigma_i$ in $[0;1]$, for $i\in\{1,\dots,N\}$. Since the $\cN_i$ are independent Poisson variables with mean $h$ and since the probability measure $ \calP$ is supported in the set of paths with an even number of flips, we find that 
		\[\begin{split}
   		\mathcal{P}^{\otimes N}\big((S_\eps)^c\big) \leq e^{-N\eps\sqrt{d}/2} \int_{\Omega^N} d\mathcal{P}^{\otimes N}(\sigma)\exp\bigg( \sum_{i=1}^N \mathcal{N}_i \bigg)  &
		\leq \bigg( 2\sum_{j=0}^{\infty} \frac{ h^j}{j!} e^{\,j-h} \bigg)^N e^{-N\eps\sqrt{d}/4}\\&\leq e^{2N (e-1)h-N\eps\sqrt{d}/4}\to 0
		\end{split}\]
as $N\to\infty $, whenever $ \eps\sqrt d \geq 8 (e-1)h$. Next, we show that 
		$$ S_\eps = \Big\{ \sigma\in\Omega^N: \sum_{i=1}^N\cN_i \leq  N \eps\sqrt d /4 \, \Big\}\subset \Big \{ \sigma\in \Omega^N:  \frac1N \sum_{i=1}^N \| \sigma_i- \sigma_i(d)\|_2^2\leq \eps  \Big\}=S_\eps^d$$
s.t. $ \lim_{N\to\infty} \mathcal{P}^{\otimes N}((S_\eps^d)^c)=0$ for $ \eps\sqrt d$ large enough. In fact, let $ \sigma = (\sigma_1,\dots,\sigma_N)\in \Omega^N$, let $k\in\mathbb{N} $ with $k\geq 2$ and recall the definition of the basis $(e_k)_{k\in\mathbb{N}}$ in \eqref{eq:defbasis}. We split $ [0;1]$ into the $k$ intervals $I_{jk}=[\frac{j}{k},  \frac{j+1}{k}]$ for $j\in \{0,1,\dots,k-1\}$ and consider two cases. If $ \sigma_i$ does not have a flip in $I_{jk}$, then by periodicity of $ e_{2k}$ and $ e_{2k+1}$, we find that 
		\[ \int_{I_{jk}} dt\;e_{2k}(t) \sigma_i(t)=\int_{I_{jk}} dt\;e_{2k+1}(t) \sigma_i(t)  =0. \]
	Otherwise, if $\sigma_i$ flips in $I_{jk}$, we use $ \|\sigma_i\|_\infty\leq 1, \|e_k\|_\infty\leq \sqrt 2$ and $| I_{jk}| \leq \frac1k $ so that
		\[ \bigg|\int_{I_{jk}} dt\; e_{2k}(t)\sigma_i(t) \bigg| \leq \frac{\sqrt 2}k, \hspace{0.5cm}\bigg|\int_{I_{jk}} dt\; e_{2k+1}(t)\sigma_i(t) \bigg| \leq \frac{\sqrt 2}k.	\]
As a consequence, we obtain that
		\[ | \langle  e_{2k}, \sigma_i\rangle_2|\leq \frac{\sqrt{2}\,\cN_i }k, \hspace{0.5cm} | \langle  e_{2k+1}, \sigma_i\rangle_2|\leq \frac{\sqrt{2}\,\cN_i }k\]
and therefore, by Cauchy-Schwarz and the fact that $ \|\sigma_i\|_2=1$, we have that
		\[\begin{split} 
		\frac1N\sum_{i=1}^N \| \sigma_i-\sigma_i(d)\|_2^2 &= \frac1N\sum_{i=1}^N\sum_{k = d+1}^{\infty}|\langle e_k, \sigma_i\rangle_2|^2 \\
		& \leq  \frac1N\sum_{i=1}^N\sum_{k =\lfloor (d+1)/2\rfloor}^{\infty}\Big( |\langle e_{2k}, \sigma_i\rangle_2|^2 +|\langle e_{2k+1}, \sigma_i\rangle_2|^2\Big)\\
		&\leq \frac{2\sqrt 2}N\sum_{i=1}^N\bigg(\sum_{k = \lfloor (d+1)/2\rfloor}^{\infty} \frac{\cN_i^2}{k^2}\bigg)^{1/2}\leq \frac{4}N\sum_{i=1}^N \frac{\cN_i}{\sqrt d}.
		\end{split}\]
In particular, if $ \sigma\in S_\eps$, then by the previous bound and the definition \eqref{eq:defSepsd}, we see that $ \sigma\in S_\eps^d$. As mentioned already, this implies with the arguments from above that $$ \lim_{N\to\infty} \mathcal{P}^{\otimes N}\big((S_\eps^d)^c\big)=0.$$ 

In the second step, we will estimate the difference of $\frac{1}{N}\mathbb{E}\big(\log \cZ_N - \log\cZ_N(S_\eps^d)\big)$ in terms of $\mathcal{P}^{\otimes N}((S_\eps^d)^c)$. Here, we argue similarly as in \cite[Lemma 6]{Cr}. Observe first that
		\[\begin{split}
		0\leq  \mathbb{E} \Big( \log \cZ_N - \log \cZ_N(S_\eps^d) \Big)  
		\leq \, \log \bigg( 1  + \int_{(S_\eps^d)^c} d\calP^{\otimes N} ( \sigma) \,\mathbb{E}\frac{\exp(\cH_N(\sigma)) }{ \cZ_N(S_\eps^d) }\bigg).
		\end{split}\]
Applying Jensen's inequality w.r.t. the measure $ \big(\calP^{\otimes N} (S_\eps^d)\big)^{-1}d\calP^{\otimes N}(\cdot)_{|S_\eps^d}$ and the fact that $ (\cH_N(\sigma))_{\sigma\in \Omega^N} $ is a Gaussian process with covariance bounded by 
$$ \big| \mathbb{E} \cH_N(\sigma^1)\cH_N(\sigma^2)\big| \leq N\beta^2 \|Q(\sigma^1,\sigma^2)\|_{HS}^2\leq N\beta^2 ,  $$ 
implies that
		\[\begin{split}
		\int_{(S_\eps^d)^c} d\calP^{\otimes N} ( \sigma) \,\mathbb{E}\frac{\exp(\cH_N(\sigma)) }{ \cZ_N(S_\eps^d) }  
		&\leq  \int_{(S_\eps^d)^c} \frac{d\calP^{\otimes N} ( \sigma)}{\calP^{\otimes N} (S_\eps^d)}\mathbb{E} \exp\bigg( \cH_N(\sigma)+  \int_{S_\eps^d} \frac{d\calP^{\otimes N} ( \tau)}{\calP^{\otimes N} (S_\eps^d)}  \cH_N(\tau)   \bigg)\\
		&\leq\frac{\calP^{\otimes N} (S_\eps^d)^c}{2\calP^{\otimes N} (S_\eps^d)}\bigg[ e^{ 2N \beta^2  } + \mathbb{E} \exp\bigg( 2 \int_{S_\eps^d} \frac{d\calP^{\otimes N} ( \tau)}{\calP^{\otimes N} (S_\eps^d)}  \cH_N(\tau)   \bigg)\bigg] \\
		&\leq e^{4N \beta^2  } \frac{\calP^{\otimes N} (S_\eps^d)^c}{\calP^{\otimes N} (S_\eps^d)}=e^{4 N \beta^2  } \frac{\calP^{\otimes N} (S_\eps^d)^c}{1-\calP^{\otimes N} ((S_\eps^d)^c)}
		\end{split}\]
By the first step, if $ \eps\sqrt d > 8(e-1)h+4 \beta^2$ , the previous estimates imply that
		\[0\leq \limsup_{N\to\infty} \frac1N  \Big(\mathbb{E} \log \cZ_N - \mathbb{E}\log \cZ_N(S^d_{\eps}) \Big)\leq \limsup_{N\to\infty}\frac1N \log (1 + e^{-CN } ) =0,\]
where $C=C_{\beta,h,\eps,d}>0$ is some positive constant.	
\end{proof}
We notice that, with the same arguments as in the previous proof, it follows that 
		\begin{equation}\label{eq:lemmacorZNd}
		\lim_{N \to \infty} \frac{1}{N}\Big( \mathbb{E}\log \cZ_N^d - \mathbb{E}\log \cZ_N^d(S^d_\epsilon)\Big) =0.
		\end{equation}
for $\eps\sqrt{d}> 8 (e-1)h+4\beta^2$. Indeed, the only property of the energies $(\cH_N(\sigma))_{\sigma\in\Omega^N}$ that we used in the previous proof is that their covariance is uniformly bounded by $ N\beta^2$. Looking at the definition \eqref{eq:redHam}, this is also the case for $ \cH_N^d$, independently of $d\in\NN$.

The next lemma compares the energies $N^{-1}\mathbb{E}\log\cZ_N^d(S^d_\eps)$ and $N^{-1}\mathbb{E}\log\cZ_N(S^d_\eps)$.
\begin{lemma}\label{lem:FinDimRed}
    Let $S^d_\eps$ be defined as in \eqref{eq:defSepsd}. For any $\eps$ and any $d\in\NN$, we have that
    \begin{equation}
        \frac{1}{N}\big|\mathbb{E} \log \cZ_N(S^d_\eps) -\mathbb{E}\log\cZ^d_N(S^d_\eps)\big| \le 4\beta^2 \sqrt\eps.
    \end{equation}
\end{lemma}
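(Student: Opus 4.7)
The plan is a standard Gaussian interpolation between $\cH_N$ and $\cH_N^d$ on the common constraint set $S_\eps^d$. Introduce an independent copy $(g'_{ij})_{1\le i,j\le N}$ of the disorder and, for $t\in[0;1]$, define the interpolating Hamiltonian
\[
\cH_N^t(\sigma) = \frac{\beta\sqrt{t}}{\sqrt{N}}\sum_{1\le i,j\le N}g_{ij}\langle\sigma_i,\sigma_j\rangle_2 + \frac{\beta\sqrt{1-t}}{\sqrt{N}}\sum_{1\le i,j\le N}g'_{ij}\langle\sigma_i(d),\sigma_j(d)\rangle_2
\]
together with the interpolating free energy $\varphi(t) = N^{-1}\mathbb{E}\log\int_{S_\eps^d}d\calP^{\otimes N}(\sigma)\exp\cH_N^t(\sigma)$. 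Since $(g_{ij})$ and $(g'_{ij})$ are each i.i.d. standard Gaussian families, $\varphi(1) = N^{-1}\mathbb{E}\log\cZ_N(S_\eps^d)$ and $\varphi(0) = N^{-1}\mathbb{E}\log\cZ_N^d(S_\eps^d)$, so it suffices to show $|\varphi'(t)| \le 4\beta^2\sqrt{\eps}$ uniformly for $t\in(0;1)$.

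The derivative $\varphi'(t)$ is computed by differentiating inside $\mathbb{E}\log$ and applying Gaussian integration by parts to the $g$ and $g'$ families separately; the $1/\sqrt{t}$ and $1/\sqrt{1-t}$ factors from $\partial_t\cH_N^t$ are cancelled against the $\sqrt{t}$ and $\sqrt{1-t}$ factors arising from $\partial_{g_{ij}}\cH_N^t$ and $\partial_{g'_{ij}}\cH_N^t$. Identifying the resulting double sums with Hilbert-Schmidt norms via $\frac1{N^2}\sum_{i,j}\langle\sigma_i^a,\sigma_j^a\rangle_2\langle\sigma_i^b,\sigma_j^b\rangle_2 = \|Q(\sigma^a,\sigma^b)\|_{HS}^2$, one arrives at
\[
\varphi'(t) = \frac{\beta^2}{2}\,\mathbb{E}\Big\langle \bigl(\|R(\sigma^1)\|_{HS}^2 - \|R^d(\sigma^1)\|_{HS}^2\bigr) - \bigl(\|Q(\sigma^1,\sigma^2)\|_{HS}^2 - \|Q^d(\sigma^1,\sigma^2)\|_{HS}^2\bigr)\Big\rangle_t,
\]
where $\langle\cdot\rangle_t$ denotes the two-replica Gibbs average for the interpolating measure restricted to $S_\eps^d$ and $R^d(\sigma) = Q^d(\sigma,\sigma)$ is the dimension-reduced self-overlap.

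The content of the lemma is then the Hilbert-Schmidt estimate $\|Q(\sigma^1,\sigma^2) - Q^d(\sigma^1,\sigma^2)\|_{HS} \le 2\sqrt{\eps}$ on $(S_\eps^d)^2$. Writing
\[
Q(\sigma^1,\sigma^2) - Q^d(\sigma^1,\sigma^2) = \frac1N\sum_{i=1}^N |\sigma_i^1-\sigma_i^1(d)\rangle\langle\sigma_i^2| + \frac1N\sum_{i=1}^N |\sigma_i^1(d)\rangle\langle\sigma_i^2-\sigma_i^2(d)|,
\]
the HS norm of each rank-$N$ average is controlled by $\bigl(\frac1N\sum_i\|a_i\|_2^2\bigr)^{1/2}\bigl(\frac1N\sum_i\|b_i\|_2^2\bigr)^{1/2}$ via expansion of $\|\cdot\|_{HS}^2$ and Cauchy-Schwarz, and the definition of $S_\eps^d$ together with $\|\sigma_i\|_2 = 1$ and $\|\sigma_i(d)\|_2\le 1$ bounds each summand by $\sqrt{\eps}$. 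Combined with $\|Q\|_{HS},\|Q^d\|_{HS}\le 1$ (already used in the proof of Lemma \ref{lem:Crawford}) and the identity $\|Q\|_{HS}^2 - \|Q^d\|_{HS}^2 = \tr\bigl((Q-Q^d)^*Q\bigr) + \tr\bigl((Q^d)^*(Q-Q^d)\bigr)$, this gives $|\|Q\|_{HS}^2 - \|Q^d\|_{HS}^2| \le 4\sqrt{\eps}$ pointwise on $(S_\eps^d)^2$; the same bound for the self-overlap follows by specializing to $\sigma^1 = \sigma^2$. Inserting into the expression for $\varphi'(t)$ yields $|\varphi'(t)| \le 4\beta^2\sqrt{\eps}$, and integrating over $t\in[0;1]$ completes the argument. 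The only step with any real content is the HS decomposition of $Q-Q^d$; the Gaussian IBP calculation is entirely routine and I do not anticipate a serious obstacle.
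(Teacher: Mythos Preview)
Your proposal is correct and follows essentially the same route as the paper: Gaussian interpolation between $\cH_N$ and $\cH_N^d$ with independent disorder, integration by parts to reduce to $\|Q\|_{HS}^2-\|Q^d\|_{HS}^2$, and then the bound $\|Q-Q^d\|_{HS}\le 2\sqrt{\eps}$ on $S_\eps^d$ via the rank-one decomposition. The only cosmetic difference is that the paper estimates $\|Q-Q^d\|_{HS}$ term-by-term via the triangle inequality and $\big\||\sigma_i^1\rangle\langle\sigma_i^2|-|\sigma_i^1(d)\rangle\langle\sigma_i^2(d)|\big\|_{HS}\le \|\sigma_i^1-\sigma_i^1(d)\|_2+\|\sigma_i^2-\sigma_i^2(d)\|_2$, whereas you bound the average directly; both give the same constant.
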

\begin{proof}
The proof follows by interpolation. Consider the Gaussian process
		\begin{equation*}\label{eq:InterpHam1}
    \cH_{N,t}^d(\sigma)= \sqrt{t} \cH_N(\sigma) + \sqrt{1-t} \cH_N^{d}(\sigma),
		\end{equation*}
with $\cH_N(\sigma)$ as in \eqref{eq:defHNquantum} and where $\cH_N^d(\sigma)$ is defined as in \eqref{eq:redHam}, but with Gaussian disorder $ (\widetilde{g}_{ij})_{1\leq i, j\leq N}$ independent of the disorder $ (g_{ij})_{1\leq i,j\leq N}$ in $\cH_N(\sigma)$. We denote by $\cZ_{N,t}^{d}(S_\eps^d)$ the restricted partition function that is defined as in \eqref{eq:constrainedZ}, but with the Hamiltonian $\cH_N$ replaced by the Hamiltonian $\cH_{N,t}^d$. Below, we also denote by $\langle \cdot \rangle_t$ Gibbs expectations corresponding to the Hamiltonian $\cH_{N,t}^d$. 

Differentiating in $t$ and applying Gaussian integration by parts, we find that
\begin{equation} \label{eq:InterpDimRed}
\begin{aligned}
        \partial_t \mathbb{E} \log \cZ_{N,t}^{d}(S^d_\eps) 
        &= \frac{1}{2} \mathbb{E} \Big[\langle \text{Cov}(  
        \cH_N(\sigma^1),  \cH_N(\sigma^1))\rangle_t - \langle\text{Cov}(\cH_N(\sigma^1), \cH_N(\sigma^2))\rangle_t \\
        & \hspace{1 cm} - \langle\text{Cov}( \cH^d_N(\sigma^1), \cH^d_N(\sigma^1)) \rangle_t + \langle \text{Cov}( \cH^d_N(\sigma^1), \cH^d_N(\sigma^2)) \rangle_t \Big].
\end{aligned}
\end{equation}
To estimate the r.h.s. in the last equation, we will use the fact that in $S_\eps^d$, the differences between the covariances of $\cH_N$ and $\cH_N^d$ are small. For $ \sigma^1,\sigma^2\in S_\eps^d$, we bound
\begin{equation*}
\begin{aligned}
    &\big|\text{Cov}(\cH_N(\sigma^1),\cH_N(\sigma^2)) - \text{Cov}(\cH^d_N(\sigma^1), \cH^d_N(\sigma^2))\big|\\
    &\leq N\beta^2  \Big| \|Q(\sigma^1,\sigma^2)\|_{HS}^2- \|Q(\sigma^1(d),\sigma^2(d))\|_{HS}^2 \Big| \\
    &\leq 2\beta^2 \sum_{i=1}^N   \big\| |\sigma_i^1\rangle\langle\sigma_i^2| -   |\sigma_i^1(d)\rangle\langle\sigma_i^2(d)|\big\|_{HS} \\
   &\leq 2\beta^2 \sum_{i=1}^N  \Big( \| \sigma^1-\sigma^1(d)\|_2 + \| \sigma^2-\sigma^2(d)\|_2 \Big) \leq 4 N \beta^2 \sqrt{\eps}.
\end{aligned}    
\end{equation*}
Hence, the r.h.s. in \eqref{eq:InterpDimRed} is bounded by $4 N \beta^2 \sqrt\eps$ and integrating over $t\in [0;1]$ yields 
		$$\frac{1}{N} \big| \mathbb{E} \log \cZ_N(S^d_\eps) -  \mathbb{E} \cZ_N^d(S^d_\eps)\big| \leq 4\beta^2 \sqrt{\eps}.$$
\end{proof}
Now, choose a fixed constant $ C=C_{h,\beta}>0$ s.t. $ C> 8(e-1)h+4\beta^2$ and set $\eps = \frac{C}{\sqrt{d}}$. Then, the two previous Lemmas \ref{lem:Crawford} and \ref{lem:FinDimRed} imply together with equation \eqref{eq:lemmacorZNd} that
\begin{equation}\label{eq:findlimit}
\lim_{N \to \infty} \frac{1}{N} \mathbb{E} \log \cZ_N = \lim_{d \to \infty}\Big(\lim_{N \to \infty} \frac{1}{N} \mathbb{E} \log \cZ^d_N\Big) .
\end{equation}

By \eqref{eq:findlimit}, we have reduced the problem of computing the free energy of the quantum model to computing the free energy of the vector-spin model with Hamiltonian $\cH_N^d$. In the next step, we show that the latter computation can be done with the help of \cite{Pan2,Pan3} that deals with finite-dimensional vector-spin glasses. 
To this end, let $\mathcal{P}_d$ denote the push-forward measure on $ \mathbb{R}^d$ that is obtained from the path measure $ \calP$ under the map 
		\[  L^2([0;1])\ni \sigma_1\mapsto \big(\langle e_1, \sigma_1\rangle_2, \dots, \langle e_d, \sigma_1\rangle_2\big)\in\mathbb{R}^d .\] 
A first important observation is that $ \operatorname{supp}(\calP_d)$ is a compact subset of $\mathbb{R}^d$. In fact, 
		\[ \big\| \big(\langle e_1, \sigma_1\rangle_2, \dots, \langle e_d, \sigma_1\rangle_2\big)\big \|_{\mathbb{R}^d}^2= \sum_{k=1}^d |\langle e_k, \sigma_1\rangle_2|^2 = \|\sigma_1(d)\|_{2}^2\leq \|\sigma_1\|_2^2 \]
for every $\sigma_1\in L^2([0;1])$ and since $ \operatorname{supp}(\calP)\subset \{ \varphi\in L^2([0;1]):\|\varphi\|_2 =1\} $, it follows that 
		$$ \operatorname{supp}(\calP_d)\subset \big\{ v\in\mathbb{R}^d:\|v\|_{\mathbb{R}^d} \leq 1\big\}. $$
Let's denote from now on the compact support of $\calP_d$ by $\operatorname{supp}(\calP_d) =\mathcal{S}_d$. Furthermore, if we denote by $ \iota: \cV^d =\operatorname{span}(e_1,\ldots,e_d)\to \mathbb{R}^d$ the linear isometry that sends
		\begin{equation}\label{eq:defiota}\cV^d \ni\sum_{k=1}^d  v_k e_k \mapsto (v_1, \dots, v_d )\in\mathbb{R}^d, \end{equation}
let's record that $ \calP_d$ is the push-forward obtained from $\calP$ under the map
		\[ L^2([0;1]\ni \sigma_1\mapsto \iota (\sigma_1(d)) \in \cV^d.\]
Similarly, the push-forward obtained from $\calP^{\otimes N} $ under the map $ \iota_N \circ f_N : \Sigma^N\to\mathbb{R}^{dN}$, with $f_N:\Sigma^N\to\Sigma^N$ defined in \eqref{eq:defred} and $ \iota_N: \Sigma_N\to (\mathbb{R}^{d})^N$ acting in each of the $N$ components as $\iota:L^2([0;1])\to \mathbb{R}^d$, is precisely $ \calP_d^{\otimes N}$ supported in $\mathcal{S}_d^N\subset\mathbb{R}^{dN}$. 

Now, define the overlap matrix of two $N$-tuple collections $\mathcal{\sigma}^1=(\mathcal{\sigma}^1_1,\ldots,\mathcal{\sigma}^1_N)$ and $\mathcal{\sigma}^2=(\mathcal{\sigma}^2_1,\ldots,\mathcal{\sigma}^2_N)$  of vectors in $\mathbb{R}^d$ as in \eqref{eq:defoverlap} by
		\begin{equation*} \label{def:PushforwardOverlap}
    \mathcal{Q}^d(\mathcal{\sigma}^1,\mathcal{\sigma}^2):= \frac{1}{N} \sum_{i=1}^N |\mathcal{\sigma}^1_i\rangle \langle \mathcal{\sigma}^2_i|
		\end{equation*}
and let $\mathscr{H}_N^d$ be a Gaussian process on $\mathbb{R}^{dN}$ with covariance 
		\begin{equation*} \label{def:PushforwardHam}
    \text{Cov}(\mathscr{H}_N^d(\sigma^1), \mathscr{H}_N^d(\sigma^2)) =  N\beta^2 \|\mathcal{Q}^d(\sigma^1,\sigma^2) \|^2_{HS}.
		\end{equation*}
Notice that $\mathscr{H}_N^d:\mathbb{R}^{dN}\to \mathbb{R}$ can be constructed explicitly as	
		\[ \mathscr{H}_N^d(\sigma) = \frac{\beta}{\sqrt N} \sum_{1\leq i, j\leq N}g_{ij} \langle\sigma_i, \sigma_j\rangle_{\mathbb{R}^d}\] 
with the Euclidean inner product $\langle\sigma_i, \sigma_j\rangle_{\mathbb{R}^d}$ on $\mathbb{R}^d$ and i.i.d. standard Gaussian couplings $ (g_{ij})_{1\leq i,j\leq N}$. After these preparations, the key observation is that
		\begin{equation}\label{eq:compZNd}
		\begin{split}
   		\frac1N\mathbb{E}\log \cZ_N^d &=  \frac{1}{N}\mathbb{E} \log \int_{\Omega^N}d\calP^{\otimes N}(\sigma) \exp{ (\mathcal{H}_N^d(\sigma))}  \\
		&= \frac{1}{N}\mathbb{E} \log \int_{\Omega^N}d\calP^{\otimes N}(\sigma) \exp{ (\mathscr{H}_N^d\circ \iota_N\circ f_N)(\sigma))}\\
		&= \frac{1}{N} \mathbb{E}\int_{\mathcal{S}_d^N}  d\calP_d^{\otimes N}(v)\exp( \mathscr{H}_N^d(v) ),
		\end{split}
		\end{equation}
which follows from \eqref{eq:redHam} and by the standard change of variables formula for push-forward measures, applied to $ \calP^{\otimes N}$ obtained from $\calP_d^{\otimes N}$ under the map $ \iota_N\circ f_N:\Sigma^N\to \mathbb{R}^d$.

Putting together \eqref{eq:findlimit} and \eqref{eq:compZNd}, $\lim_{N\to\infty}N^{-1} \mathbb{E}\log \cZ_N$ can be computed by computing the $d\to\infty$ limit of the limiting quenched free energies in the last line of \eqref{eq:compZNd}. But the latter are the free energies of classical vector-spin models with $ \mathbb{R}^d$-valued spins, distributed according to the compactly supported measure $\calP_d$ on $\mathbb{R}^d$. Such models have been analyzed in \cite{Pan2,Pan3}. In the notation of \cite{Pan3}, 
the Hamiltonian $\mathscr{H}^d_N$ can also be written as a mixed $p$-spin interaction Hamiltonian with vector-spins as in \cite[eq. (5)]{Pan3} with $\kappa=d$, $\beta_2(k) = \beta$ for $k\le d$ and $\beta_p(k)=0$ for $p \ge 3$. To determine the $N\to\infty$ limit of the free energy in the last line of \eqref{eq:compZNd}, we apply \cite[Theorems 1 \& 2]{Pan3}.
\begin{proof}[Proof of Theorem \ref{thm:parisiformula}.]
Combining \eqref{eq:findlimit}, \eqref{eq:compZNd} and \cite[Theorems 1 \& 2]{Pan3}, the proof of Theorem \ref{thm:parisiformula} is now straight-forward. In the notation of \cite{Pan3}, we first recall \cite[Theorems 1 \& 2]{Pan3} that determine the thermodynamic limit of the quenched free energy in the last line of \eqref{eq:compZNd}. After that, we explain how this implies Theorem \ref{thm:parisiformula}.

Let $ \Pi_{\rho^d}^d$ denote the set of left-continuous, monotone paths from $[0;1]$ to the space of $\mathbb{R}^{d\times d}$ positive semi-definite, symmetric matrices with endpoint $ \rho^d$. As in equation \eqref{eq:discpath}, consider a discrete path $\pi^d_{\rho^d}\in \Pi_{\rho^d}^d$ which is determined by the parameters \linebreak $ m_{-1}=0 \le m_0\le \ldots \le m_{r-1} \le m_{r}=1$
 and $ 0=\gamma^d_0 \le \gamma^d_1\le \ldots \le \gamma^d_{r-1}\le \gamma^d_{r}=\rho^d $ s.t.
 		\begin{equation}\label{eq:ddiscpath} \pi_{\rho^d}^d (t) _{| (m_{j-1};m_j]}  = \gamma_j^d, \hspace{0.5cm} j=0,\dots, r.\end{equation}
 Define i.i.d. Gaussian random vectors $X^d_j= \big(X^d_j (s)\big)_{1\leq s\leq d}$, for $j \in \{1,\ldots,r\}$, s.t.
		\begin{equation*}\label{eq:Xjd} \mathbb{E} X^d_j (s)X_j^d(t) = 2(\gamma_j^d - \gamma_{j-1}^d)(s,t)
		\end{equation*}
for $(s,t)\in \{1,\dots,d\}^2$ and let $\lambda^d=(\lambda^d)^*\in \mathbb{R}^{d\times d}$ be a symmetric matrix. Define $Y^d_r$ by
		\begin{equation*}\label{eq:Yrd}
    		Y^d_r= \log \int_{\mathcal{S}_d}d\calP_d(v) \exp\bigg(\beta \sum_{j=1}^r \langle v,X^d_j \rangle_{\mathbb{R}^d} + \tr_{\mathbb{R}^{d\times d}} \lambda^d |v\rangle\langle v|\bigg) 
		\end{equation*}
and, inductively for $j=0,\ldots r-1$, set 
		\begin{equation*}\label{eq:Yjd}
		Y^d_{j} = \frac{1}{m_j} \log \mathbb{E}_{j+1} e^{ m_{j} Y^d_{j+1}}.
		\end{equation*}
We define $\Phi^d(\pi^d_{\rho^d}, \lambda^d)=Y_0^d$ and the Parisi functional $ \cF^d$ by 
		\begin{equation*} \label{def:finFunctional}
    		\mathcal{F}^d(\rho^d,\pi^d_{\rho^d},\lambda^d) = \Phi^d(\pi^d_{\rho^d},\lambda^d) + \frac{\beta^2}{2} \int_0^1 dt\, \|\pi_{\rho^d}^d(t)\|_{HS}^2-\frac{\beta^2}{2}\|\rho^d\|_{HS}^2 -\tr_{\mathbb{R}^{d\times d}} ( \lambda^d \rho^d),
		\end{equation*}
Finally, with 
		\begin{equation*}
    		\cD^{d}=\overline{\text{conv}\big\{ |\sigma_1\rangle\langle\sigma_1| : \sigma_1 \in \operatorname{supp} (\mathcal{P}_d) \big\}}\subset \mathbb{R}^{d\times d},
		\end{equation*}
the main result of \cite{Pan3} states that (see \cite[Theorems 1 \& 2]{Pan3}) 
\begin{equation}\label{eq:Panresults}
    \lim_{N \to \infty} \frac{1}{N} \mathbb{E} \log \cZ_N^d = \sup_{\rho^d \in \cD^{d}} \inf\Big\{  \mathcal{F}^d(\rho^d,\pi^d_{\rho^d},\lambda^d ): \pi^d_{\rho^d} \in \Pi^d_{\rho^d}, \lambda^d=(\lambda^d)^*\in  \mathbb{R}^{d\times d }  \Big\}.
\end{equation}

To finish the proof of Theorem \ref{thm:parisiformula}, we express the r.h.s. in \eqref{eq:Panresults} through the generalized Parisi functional $\cF$, defined in \eqref{eq:defparisifunc}. To this end, we recall the definitions \eqref{eq:defGamma} to \eqref{eq:defparisifunc} from the previous section and we identify a given self-overlap $ \rho^d \in \cD^d$, discrete path $ \pi_{\rho^d}^d\in \Pi_{\rho^d}^d$, the Gaussian vectors $X^d_j$, $j \in \{1,\ldots,r\}$ and Lagrange multiplier $ \lambda^d\in\mathbb{R}^{d\times d}$ with a corresponding self-overlap $ \rho \in \Gamma^d$, discrete path $ \pi_\rho\in \Pi_\rho $, Gaussian processes $X_j=(X_j(s))_{s\in[0;1]}$ and Lagrange multiplier $ \lambda\in \cL_s^d$ (and vice versa) such that 
		\[\mathcal{F}^d(\rho^d,\pi^d_{\rho^d},\lambda^d ) = \mathcal{F}(\rho,\pi_{\rho},\lambda).\]
Apparently, this will imply Theorem \ref{thm:parisiformula}. First of all, we define $\rho\in \Gamma^d $, $\gamma_j\in \Gamma^d $ for $j \in \{0,\ldots,r\}$ and $ \lambda\in\cL_s^d$, by
		\[\begin{split}
		\rho  &= \sum_{k,l=1}^d  ( \rho^d)_{kl} |e_k\rangle\langle e_l| \in \Gamma^d, \hspace{0.3cm}\gamma_j = \sum_{k,l=1}^d  ( \gamma_j^d)_{kl} |e_k\rangle\langle e_l|\in \Gamma^d, \hspace{0.3cm} \lambda = \sum_{k,l=1}^d  ( \lambda^d)_{kl} |e_k\rangle\langle e_l|\in \cL_s^d.
		\end{split}\]
The discrete path $  \pi_\rho\in\Pi_\rho$ is then constructed analogously to \eqref{eq:ddiscpath} through
  		\[ \label{eq:discpathd} \pi_{\rho} (t) _{| (m_{j-1};m_j]}  = \gamma_j, \hspace{0.5cm} j=0,\dots, r.\]
Since $\cV^d\simeq \mathbb{R}^d$ are isometrically isomorphic, we observe at this point already that
		\[ \frac{\beta^2}{2} \int_0^1 dt\, \|\pi_{\rho^d}^d(t)\|_{HS}^2-\frac{\beta^2}{2}\|\rho^d\|_{HS}^2 -\tr_{\mathbb{R}^{d\times d}}( \lambda^d \rho^d) = \frac{\beta^2}{2} \int_0^1 dt\, \|\pi_{\rho}(t)\|_{HS}^2-\frac{\beta^2}{2}\|\rho\|_{HS}^2 -\tr \lambda \rho.\]
It thus only remains to define appropriate random variables $ Y_j$, $ j\in\{ 0,\dots,r\}$, corresponding to the random variables $ Y_j^d$ in the finite-dimensional setting, as defined above, such that $ Y_0^d=Y_0$. Here, we define the Gaussian processes $X_j=\big(X_j(s)\big)_{s\in[0;1]}$ through
		\[X_j = \sum_{k=1}^d (X_j^d)_k e_k. \]
We then have for all $s,t\in [0;1]$ and all $\sigma_1\in \Omega$ that
		\[  \big\langle \sigma_1, X_j\big\rangle_2  = \big\langle \iota(\sigma_1(d)),X^d_j \big\rangle_{\mathbb{R}^d}, \]
where we recall the defintion of the isometry $\iota:\cV^d\to \mathbb{R}^d$ from \eqref{eq:defiota}. Thus, by the change of variables formula for push-forward measures, we conclude that		
		\[\begin{split}
		Y^d_r &= \log \int_{\mathcal{S}_d}d\calP_d(v) \exp\bigg(\beta \sum_{j=1}^r \langle v,X^d_j \rangle_{\mathbb{R}^d} + \tr_{\mathbb{R}^{d\times d}}( \lambda^d |v\rangle\langle v|)\bigg) \\
		& = \log \int_{\Omega}d\calP(\sigma_1) \exp\bigg(\beta \sum_{j=1}^r \langle \iota(\sigma_1(d)),X^d_j \rangle_{\mathbb{R}^d} + \tr_{\mathbb{R}^{d\times d}} \big(\lambda^d |\iota(\sigma_1(d))\rangle\langle \iota(\sigma_1(d))|\big)\bigg)\\
		& = \log \int_{\Omega}d\calP(\sigma_1) \exp\bigg(\beta \sum_{j=1}^r \langle \iota(\sigma_1(d)),X^d_j \rangle_{\mathbb{R}^d} + \tr \lambda |\sigma_1\rangle\langle \sigma_1|\bigg)\\
		&=\log \int_{\Omega}d\calP(\sigma_1) \exp\bigg(\beta \sum_{j=1}^r \langle \sigma_1,X_j \rangle_2 + \tr \lambda |\sigma_1\rangle\langle \sigma_1|\bigg) = Y_r
		\end{split}\] 
with $Y_r$ defined in \eqref{eq:defYr}. Consequently, we find that $ Y_j^d = Y_j$ with $Y_j$ from \eqref{eq:defYj}, for all $j\in\{0,\dots,r\}$. In particular, $ Y_0^d=Y_0$ then implies that
		\[\mathcal{F}^d(\rho^d,\pi^d_{\rho^d},\lambda^d ) = \mathcal{F}(\rho,\pi_{\rho},\lambda).\]
Combining this with \eqref{eq:findlimit}, \eqref{eq:compZNd} and \eqref{eq:Panresults}, we find that
		\[ \lim_{N \to \infty} \frac{1}{N}\mathbb{E} \log \cZ_N= \lim_{d\to\infty} \bigg[ \lim_{N \to \infty} \frac{1}{N}\mathbb{E} \log \cZ_N^d\bigg] = \lim_{d\to\infty} \sup_{\rho \in  \Gamma^{d}}\bigg[ \inf_{ \pi \in \Pi _\rho, \lambda\in \cL_s^d} \cF(\rho, \pi_{\rho},\lambda)\bigg].\]
This concludes the proof of Theorem \ref{thm:parisiformula}. \end{proof}

\appendix
\section{Auxiliary Results}\label{appx:contresults}

In this appendix, we recall the proof of the Lipschitz continuity of the Parisi functional, defined in \eqref{eq:defparisifunc}. For $\lambda\in \cL_s$, we denote by $ \|\lambda\|$ its operator norm.
\begin{lemma} \label{lm:contdep} There exists a constant $C=C_\beta>0$ such that for any $ \rho_0, \rho_1 \in \Gamma$, \linebreak $ \lambda_0,\lambda_1 \in \cL_s$ and discrete $ \pi_{\rho_0} \in \Pi_{\rho_0},  \pi_{\rho_1}\in \Pi_{\rho_1}$, it holds true that
		\[ | \Phi( \pi_{\rho_0}, \lambda_0) -  \Phi(\pi_{\rho_1}, \lambda_1) | \leq C \Big( \| \lambda_0-\lambda_1\| +  \tr |\rho_0-\rho_1| +  \int_0^1dt\; \tr \big| (\pi_{\rho_0}-\pi_{\rho_1})(t) \big| \Big).\]
\end{lemma}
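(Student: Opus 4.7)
The plan is to split
\[
|\Phi(\pi_{\rho_0},\lambda_0) - \Phi(\pi_{\rho_1},\lambda_1)| \leq |\Phi(\pi_{\rho_0},\lambda_0) - \Phi(\pi_{\rho_0},\lambda_1)| + |\Phi(\pi_{\rho_0},\lambda_1) - \Phi(\pi_{\rho_1},\lambda_1)|
\]
by the triangle inequality and treat the $\lambda$-dependence and the $(\rho,\pi_\rho)$-dependence separately.

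For the first piece, I would use that $\operatorname{supp}(\calP)$ consists of unit vectors in $L^2([0;1])$, so $|\tr(\lambda_0-\lambda_1)|\sigma_1\rangle\langle\sigma_1|| \leq \|\lambda_0-\lambda_1\|$ for $\calP$-a.e.\ $\sigma_1$. Substituting into \eqref{eq:defYr} gives $|Y_r(\lambda_0)-Y_r(\lambda_1)| \leq \|\lambda_0-\lambda_1\|$ pointwise in the cavity fields. An elementary monotonicity argument propagates this bound through \eqref{eq:defYj}: whenever $Y_{j+1}(a)\le Y_{j+1}(b)+K$ pointwise, one has $\tfrac{1}{m_j}\log\mathbb{E}_{j+1}[e^{m_j Y_{j+1}(a)}] \leq K + \tfrac{1}{m_j}\log\mathbb{E}_{j+1}[e^{m_j Y_{j+1}(b)}]$, so by symmetry $|Y_j(a)-Y_j(b)|\leq K$. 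Iterating down to $j=0$ (with the usual convention $\lim_{m\to 0}\tfrac{1}{m}\log\mathbb{E} e^{mY} = \mathbb{E} Y$ at any trivial breakpoint) yields $|\Phi(\pi_{\rho_0},\lambda_0)-\Phi(\pi_{\rho_0},\lambda_1)| \leq \|\lambda_0-\lambda_1\|$.

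For the $(\rho,\pi_\rho)$-piece, my plan is Gaussian interpolation followed by operator-valued integration by parts, adapting the classical Parisi-functional computation (see, e.g., \cite{Pan1}). First I would refine both discrete paths to a common set of breakpoints $0 = m_{-1} \leq \cdots \leq m_r = 1$ by inserting trivial levels ($\gamma_j=\gamma_{j+1}$ or $m_{j-1}=m_j$); one checks directly from \eqref{eq:defYj} that such insertions leave $\Phi$ unchanged. Writing $\pi_{\rho_a}$ via operators $(\gamma_j^a)_{j=0}^r$ with $\gamma_0^a=0$ and $\gamma_r^a=\rho_a$, I would set $\gamma_j^t = (1-t)\gamma_j^0 + t\gamma_j^1$ (convexity preserves both positivity and operator monotonicity in $j$) and realize all cavity fields on one probability space by $X_j(t) = \sqrt{1-t}\,X_j^0 + \sqrt{t}\,X_j^1$ with $X_j^0,X_j^1$ independent of covariances $2(\gamma_j^0-\gamma_{j-1}^0)$ and $2(\gamma_j^1-\gamma_{j-1}^1)$, so that $X_j(t)$ has the required covariance $2(\gamma_j^t-\gamma_{j-1}^t)$. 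Differentiating $Y_0(t) = \Phi(\pi_{\rho_t},\lambda_1)$ in $t$ and processing each occurrence of $\partial_t X_{j+1}(t)$ by Gaussian integration by parts through the Parisi cascade then yields an identity expressing $\partial_t Y_0(t)$ as a finite sum of terms of the form $\beta^2(m_{j+1}-m_j)\tr[(\gamma_j^1-\gamma_j^0)\,A_j(t)]$ together with a boundary contribution $\beta^2\tr[(\rho_1-\rho_0)\,B(t)]$, where the operators $A_j(t),B(t)$ are cascade-weighted Gibbs averages of rank-one operators $|\sigma_1\rangle\langle\sigma_1|$ and therefore satisfy $0\leq A_j(t),B(t)\leq \unit$. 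Estimating via the trace-norm/operator-norm duality $|\tr(CD)|\leq \tr|C|\,\|D\|$ and integrating over $t\in[0;1]$ then gives
\[
|\Phi(\pi_{\rho_0},\lambda_1)-\Phi(\pi_{\rho_1},\lambda_1)| \leq \beta^2\Big(\tr|\rho_0-\rho_1| + \sum_{j=0}^{r-1}(m_{j+1}-m_j)\,\tr|\gamma_j^0-\gamma_j^1|\Big),
\]
and the last sum coincides with $\int_0^1 \tr|(\pi_{\rho_0}-\pi_{\rho_1})(t)|\,dt$ by the step-function form of $\pi_\rho$.

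The technical crux I expect is the operator-valued Gaussian IBP performed inside the nested $\log\mathbb{E}_{j+1}$ of \eqref{eq:defYj}: one must carefully verify that the cascade-averaged kernels $A_j(t),B(t)$ are genuinely bounded by $\unit$ in the operator sense on $L^2([0;1])$ and that the natural cascade weights $(m_{j+1}-m_j)$ emerge with the correct signs. Once this step is in place, the reassembly of $\partial_t Y_0(t)$ into the stated Lipschitz form is routine Abel summation, and combined with the $\lambda$-bound from the first step it produces the overall constant $C = C_\beta$ claimed in the lemma.
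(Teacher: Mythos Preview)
Your proposal is correct, but it follows a genuinely different route from the paper's proof. The paper does not split off the $\lambda$-dependence, nor does it differentiate through the nested recursion \eqref{eq:defYj}. Instead it invokes the Ruelle probability cascade representation
\[
\Phi(\pi_{\rho_i},\lambda_i)=\mathbb{E}\log\sum_{\alpha\in\mathbb{N}^r}\nu_\alpha\int_\Omega d\calP(\sigma_1)\,\exp\big(\beta\langle\sigma_1,Z_{\alpha,i}\rangle_2+\tr\lambda_i|\sigma_1\rangle\langle\sigma_1|\big),
\]
which flattens the $r$-level tower into a single $\log\mathbb{E}$. It then interpolates $Z_{\alpha,t}=\sqrt{t}\,Z_{\alpha,0}+\sqrt{1-t}\,Z_{\alpha,1}$ and $\lambda_t=t\lambda_0+(1-t)\lambda_1$ simultaneously, applies one Gaussian integration by parts, and uses the cascade invariance property (\cite[Theorem~4.4]{Pan1}) together with $\sum_{\alpha^1\wedge\alpha^2=j}\nu_{\alpha^1}\nu_{\alpha^2}=m_j-m_{j-1}$ to read off the weights directly. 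What the paper's approach buys is that the emergence of the factors $(m_j-m_{j-1})$ and the boundedness of the averaged kernels by $\unit$---precisely the ``technical crux'' you flag---become immediate consequences of known RPC facts rather than a recursive bookkeeping exercise. What your approach buys is self-containedness: your $\lambda$-step via pointwise monotonicity is cleaner than interpolating $\lambda$, and the recursive IBP avoids importing the RPC machinery altogether. Both arrive at the same bound with $C=1+\beta^2/2$ (your indexing of the final sum is shifted by one relative to the paper's convention $\pi_\rho|_{(m_{j-1},m_j]}=\gamma_j$, but this is cosmetic).
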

\begin{proof} The proof follows closely \cite[Lemma 7]{Pan2}. We apply Gaussian interpolation and use a representation of the functional $\Phi$, defined in \eqref{eq:defPhi}, in terms of the Ruelle probability cascades (see \cite[Section 2.3]{Pan1}, \cite[Section 13.1]{Tal2} for their definition and basic properties).

Let $ \rho_0, \rho_1 \in \Gamma$, let $ \lambda_0, \lambda_1\in \cL_s$ and let $ \pi_{\rho_0} \in \Pi_{\rho_0},  \pi_{\rho_1}\in \Pi_{\rho_1}$ be discrete paths, as in \eqref{eq:discpath}. Without loss of generality, we can assume that the paths are defined in terms of the same sequence $ (m_j)_{-1\leq j\leq r}$ with $ m_{-1}=0$ and $m_r=1$. We denote the overlap parameters of $ \pi_{\rho_i}$ (for $i=0,1$) by $0=\gamma_{0,i}\leq \gamma_{1,i}\leq \ldots\leq \gamma_{r,i}=\rho_i$ and we associate to $ (\gamma_{j,i})_{0\leq j\leq r}$ Gaussian processes $ (X_{i,j})_{0\leq j\leq r}$ as in \eqref{eq:covXgamma} s.t. $ X_{i,j}$ has covariance $ \gamma_{i,j}-\gamma_{i,j-1}\geq 0$ (here and in the following, we assume that the processes for $ i=0$ are independent from the processes for $i=1$). Furthermore, we denote by $( \nu_\alpha)_{\alpha\in \mathbb{N}^r }$ the weights of the Ruelle probability cascades corresponding to the sequence $ m_{-1}=0<m_1< \ldots<m_r = 1$ and, indexed by $ \alpha\in \mathbb{N}^r$, we construct (for $i=0,1$) a sequence of Gaussian processes $Z_{\alpha,i} =  \big(  (Z_{\alpha,i}(s))_{s\in[0;1]}\big)_{\alpha\in\mathbb{N}^r} $ with covariance
		\[ \mathbb{E} Z_{ \alpha^1,i} (s) Z_{\alpha^2,i}(t) =  \gamma_{\alpha^1\wedge \alpha^2,i}(s,t), \]
where we set
	\[ \alpha^1\wedge\alpha^2 = \min \{0\leq j\leq r: \alpha_1^1 = \alpha_1^2,\dots, \alpha_j^1=\alpha_j^2, \alpha_{j+1}^1\neq \alpha_{j+1}^2 \}.\]
We remark that, using independent copies of the processes $(X_{j,i}(s))_{s\in[0;1]}$ as above, it is straightforward to construct the Gaussian processes $ (Z_{\alpha,i})_{\alpha\in \mathbb{N}^r}  $ explicitly (see, for instance, \cite[Sections 2.3 and 3.1]{Pan1}). We skip the details to avoid further notation.

A basic property of the weights $( \nu_\alpha)_{\alpha\in \mathbb{N}^r }$ (see \cite[Theorem 2.9]{Pan1}) is that 
		\[ \Phi (\pi_{\rho_i}, \lambda_i) = \mathbb{E} \log \sum_{\alpha\in \mathbb{N}^r} \nu_\alpha\int_{\Omega} d\calP(\sigma_1) \exp\Big( \beta \langle \sigma_1, Z_{\alpha,i}\rangle_2  + \tr \lambda_i |\sigma_1\rangle\langle\sigma_1|  \Big) \]
which enables us to interpolate between $ \Phi ( \pi_{\rho_0}, \lambda_0)$ and $\Phi ( \pi_{\rho_1}, \lambda_1)$. To this end, for $ t\in [0;1] $ and $\alpha\in \mathbb{N}^r $, let's denote by $(Z_{\alpha,t})_{\alpha \in \mathbb{N}^r}$ the family of processes
		\[ Z_{\alpha,t} = \sqrt{t} Z_{\alpha,0} + \sqrt{1-t} Z_{\alpha,1}. \]
The covariance between $Z_{\alpha^1,t} $ and $Z_{\alpha^2,t}  $ is consequently	
		\[ \gamma_{\alpha^1\wedge \alpha^2,t} = t\gamma_{\alpha^1\wedge \alpha^2,0} + (1-t) \gamma_{\alpha^1\wedge \alpha^2,1}\geq 0.\]
To interpolate between $ \Phi ( \pi_{\rho_0}, \lambda_0)$ and $\Phi ( \pi_{\rho_1}, \lambda_1)$, we finally define $ \Xi: [0;1]\to \mathbb{R} $ by 
		\[\begin{split} \Xi(t) &= \mathbb{E} \log \sum_{\alpha\in \mathbb{N}^r} \nu_\alpha\int_{\Omega} d\calP(\sigma_1) \exp\Big ( \beta \langle \sigma_1, Z_{\alpha,t}\rangle_2 +\tr (t\lambda_0 + (1-t)\lambda_1)|\sigma_1\rangle\langle\sigma_1|   \Big).
		\end{split}\]
Looking at the definition of $\Xi$, we notice $ \Xi(0) = \Phi(\pi_{\rho_0}, \lambda_0)$ while $ \Xi(1) = \Phi(\pi_{\rho_1}, \lambda_1)$. Now, let's compute $ \partial_t \Xi$. Applying Gaussian integration by parts, we find that
		\[\begin{split}
		(\partial_t\Xi )(t) 
		= &\; \frac{\beta^2}{2}\Big( \mathbb{E}\big\langle \langle \rho_0-\rho_1, |\sigma_1\rangle\langle\sigma_1|\rangle_{HS} \big\rangle_t   - \mathbb{E} \big\langle \langle \sigma_1^1, (\gamma_{\alpha^1\wedge\alpha^2,0}-\gamma_{\alpha^1\wedge\alpha^2,1})\sigma_1^2\rangle_2 \big\rangle_t\Big)  \\
		&\;+ \mathbb{E}\big\langle \tr (\lambda_0-\lambda_1)|\sigma_1\rangle\langle\sigma_1|  \big\rangle_t,
		\end{split}\]
where we set $ \langle\cdot\rangle_t$ to be the time-dependent Gibbs measure on $ \Omega\times \mathbb{N}^r$ with density
		\[ \nu_\alpha d\calP(\sigma_1) \exp\Big( \beta \langle \sigma_1, Z_{\alpha,t}\rangle_2  + \tr (t\lambda_0+(1-t)\lambda_1)|\sigma_1\rangle\langle\sigma_1| \Big) \]
and where $ \sigma_1^1, \sigma_1^2$ denote as usual two replicas of the system. In particular, we obtain 
		\[\begin{split}
		\sup_{t\in [0;1]}|(\partial_t\Xi )(t) |\leq &\; \| \lambda_0-\lambda_1\| +\frac{\beta^2}2  \tr| \rho_0-\rho_1| + \frac{\beta^2}2 \mathbb{E} \big\langle \tr\big| \gamma_{\alpha^1\wedge\alpha^2,0}-\gamma_{\alpha^1\wedge\alpha^2,1}\big| \big\rangle_t. 
		 \end{split}\]
To evaluate the r.h.s. of the last bound further, we note that it follows from general properties of the Ruelle probability cascades (see \cite[eq. (2.82) \& Theorem 4.4]{Pan1}) that 
		\[\begin{split} \mathbb{E} \big\langle \tr\big| \gamma_{\alpha^1\wedge\alpha^2,0}-\gamma_{\alpha^1\wedge\alpha^2,1}\big| \big\rangle_t =&\; \sum_{\alpha^1,\alpha^2\in \mathbb{N}^r} \nu_{\alpha^1}\nu_{\alpha^2} \tr\big| \gamma_{\alpha^1\wedge\alpha^2,0}-\gamma_{\alpha^1\wedge\alpha^2,1}\big|\\
		= &\; \sum_{0\leq j \leq r }\tr\big|  \gamma_{j,0}-\gamma_{j,1} \big|\sum_{\alpha^1\wedge\alpha^2 = j} \nu_{\alpha^1}\nu_{\alpha^2} \\
		= &\; \sum_{0\leq j \leq r }(m_{j}-m_{j-1})\tr \big |  \gamma_{j,0}-\gamma_{j,1}\big| \\
		=&\; \int_0^1dt\; \tr\big | ( \pi_{\rho_0}-\pi_{\rho_1})(t)\big|.
		\end{split}\]
Thus, summarizing the steps from above, we conclude that
		\[\begin{split} | \Phi(\pi_{\rho_0}, \lambda_0)- \Phi(\pi_{\rho_1}, \lambda_1)| 
		&\leq C\Big( \| \lambda_0-\lambda_1\| +  \tr| \rho_0-\rho_1 |+  \int_0^1dt\; \tr\big| (\pi_{\rho_0}-\pi_{\rho_1})(t)\big|\Big),
		\end{split}\]
where $ C = C_\beta = 1+ \beta^2/2$, as desired.
\end{proof}
It follows as a corollary that for fixed $ \rho\in \Gamma$ and $ \lambda\in \cL_s$, we can extend $ \cF(\rho, \pi_\rho, \lambda)$ to any path $\pi_\rho\in \Pi_\rho$. Notice that $ \cF$ was defined in \eqref{eq:defparisifunc} initially only for discrete paths. To extend $  \cF(\rho, \cdot, \lambda)$, let $ \pi_\rho \in \Pi_\rho$. Since $\pi_\rho $ is monotone, it is clear that also $ \tr \pi_\rho $ is a monotonically increasing function in $[0;1]$. By standard arguments, we can approximate $t\mapsto  \tr  \pi_\rho(t)$ in $ L^1([0;1])$ monotonically from below by a sequence of monotonically increasing simple functions, which, by monotonicity of $\pi_\rho$ itself, can be associated to a monotonically increasing sequence of discrete paths in $\Pi_\rho$. If we denote this monotone sequence by $ (\pi_{\rho,n})_{n\in\NN}$, the previous lemma implies together with the fact that \linebreak $ 0\leq \tr | \pi_{\rho,n} - \pi_{\rho,m}| = \tr ( \pi_{\rho,n} - \pi_{\rho,m}) =|  \tr \pi_{\rho,n} - \tr\pi_{\rho,m}|$ whenever $ n\geq m$, that  
		\[ \lim_{ n,m\to\infty} | \cF(\rho, \pi_{\rho,n}, \lambda) - \cF(\rho, \pi_{\rho,m}, \lambda)| =0.\]
We denote its limit by $ \cF(\rho, \pi_\rho,\lambda)$ and, for fixed $\rho\in\Gamma$, the map $ (\pi_\rho, \lambda)\mapsto \cF(\rho, \pi_\rho, \lambda)$ is jointly Lipschitz continuous w.r.t. the metric 
		\[ \big((\pi_\rho, \lambda), (\pi'_\rho, \lambda') \big)\mapsto \| \lambda-\lambda'\| + \int_0^1dt\; \tr\big| (\pi_\rho-\pi'_\rho)(t)\big|. \]

\end{document}